\theoremstyle{plain}
\newtheorem{theorem}{Theorem}
\newtheorem{lemma}{Lemma}
\newtheorem{corollary}{Corollary}
\theoremstyle{definition}
\newtheorem{definition}{Definition}
\newtheorem{assumption}{Assumption}
\theoremstyle{remark}
\newtheorem{remark}{Remark}
\DeclareMathOperator{\dist}{dist}
\DeclareMathOperator{\dom}{dom}
\author{M.V. Dolgopolik\footnote{Institute for Problems in Mechanical Engineering of the Russian Academy of Sciences,
Saint Petersburg, Russia}}
\title{A universal convergence theorem for primal-dual penalty and augmented Lagrangian methods}
\begin{document}

\maketitle

\begin{abstract}
We present a so-called universal convergence theorem for inexact primal-dual penalty and augmented Lagrangian methods
that can be applied to a large number of such methods and reduces their convergence analysis to verification of some 
simple conditions on sequences generated by these methods. If these conditions are verified, then both primal and dual
convergence follow directly from the universal convergence theorem. This theorem allows one not only to derive standard
convergence theorems for many existing primal-dual penalty and augmented Lagrangian methods in a unified and
straightforward manner, but also to strengthen and generalize some of these theorems. In particular, we show how with
the use of the universal convergence theorem one can significantly improve some existing results on convergence of 
a primal-dual rounded weighted $\ell_1$-penalty method, an augmented Lagrangian method for cone constrained
optimization, and some other primal-dual methods.
\end{abstract}

\section{Introduction}

Design and convergence analysis of primal-dual penalty and augmented Lagrangian methods have been active areas of
research in constrained optimization for many years
\cite{Bertsekas,LuoSunLi,LuoSunWu,WangLi2009,Burachick2011,LuoWuChen2012}. Such methods, apart from being popular 
algorithms for solving various convex programming problems
\cite{Rockafellar73,TsengBertsekas,Iusem99,Polyak2002,Xu2021,CuiDingLiZhao}, have also found applications in
constrained global optimization \cite{BirginFloudasMartinez,CordovaOliveiraSagastizabal}. Furthermore, in the nonconvex
case primal-dual methods combined with local search unconstrained optimization algorithms instead of the global ones
sometimes significantly outperform specialized nonlinear programming solvers in computation time and/or quality of
computed local minimizers (see the results of numerical experiments in \cite{BurachikKayaPrice,Dolgopolik_DCSemidef}).

Although convergence analysis of primal-dual penalty and augmented Lagrangian methods is a well-established topic in
constrained optimization, little to no attention has been paid to uncovering general principles on which such analysis
is based. If one carefully reads numerous papers dealing with convergence of primal-dual methods, one will quickly
notice that very similar arguments are used in all these paper and very similar results are proved again and again for
each new primal-dual method and each new type of constrained optimization problems (for example, compare convergence
analysis from \cite{BirginMartinez,CordovaOliveiraSagastizabal,BurachikKayaPrice,LuoWuChen2012,WuLuoDingChen2013}).
Some attempts to unify convergence analysis of augmented Lagrangian methods for inequality constrained problems were
made in \cite{LuoSunLi,LuoSunWu,WangLi2009}. However, even in this specific case no real unification was achieved,
since neither of the results of one of these papers can be derived from the results of the other two due to the use of
different augmented Lagrangians and slightly different, non-interchangable assumptions, despite the fact that papers
\cite{LuoSunLi,LuoSunWu,WangLi2009} contain very similar results that are based on very similar proofs.

The main goal of this paper is to present a so-called \textit{universal convergence theorem} for primal-dual penalty
and augmented Lagrangian methods that (i) uncovers general principles on which convergence analysis of such methods is
based and (ii) can be applied to the vast majority of existing primal-dual methods for various types of constrained
optimization problems, including nonlinear programming problems, cone constrained optimization problems, etc. To this
end, we present a general duality scheme and an inexact primal-dual method based on this scheme that encompasses many
existing primal-dual penalty and augmented Lagrangian methods as particular cases. Then we prove a general convergence
theorem for this inexact primal-dual method that we call the universal convergence theorem. This theorem reduces the
proof of primal and dual convergence to verification of some simple conditions on sequences generated by the method, the
main one of which is the convergence of the infeasibility measure to zero. Verification of these conditions is at the 
core of the proofs of most existing convergence theorems for primal-dual methods. When these conditions are verified,
the same standard arguments are used to prove primal and/or dual convergence. The universal convergence theorem
eliminates the need to repeat these arguments.

In order to demonstrate the strengths of the universal convergence theorem and underline its universal nature, we
consider its applications to convergence analysis of four different primal-dual penalty/augmented Lagrangian methods:
(i) a simple primal-dual penalty method, (ii) a primal-dual rounded weighted $\ell_1$-penalty method proposed in
\cite{BurachikKayaPrice}, (iii) an extension of the classical augmented Lagrangian method based on the
Hestenes-Powell-Rockafellar augmented Lagrangian \cite{BirginMartinez} to the case of general cone constrained
optimization problems and, finally, (iv) a multiplier algorithm based on the so-called P-type Augmented Lagrangian
Method (PALM) from \cite{WangLi2009}. We show that with the use of the universal convergence theorem one can not only
easily recover existing convergence theorems for these methods, but also significantly strengthen and generalize them. 

The paper is organized as follows. A general duality scheme that allows one to describe many existing primal-dual 
penalty and augmented Lagrangian methods in a unified manner is presented in Section~\ref{sect:DualityScheme}. 
Section~\ref{sect:UnivConvTheorem} is devoted to the universal convergence theorem for inexact primal-dual methods, 
while Section~\ref{sect:Applications} contains applications of this theorem to convergence analysis of several different
penalty and augmented Lagrangian methods. Namely, in Subsection~\ref{subsect:PenaltyMethod} we consider a simple
primal-dual penalty method, in Subsection~\ref{subsect:RoundedWeightedPenaltyMethod} a primal-dual rounded weighted
$\ell_1$-penalty method, in Subsection~\ref{subsect:AugmLagrMethod} an augmented Lagrangian method for general cone
constrained optimization problems, and in Subsection~\ref{subsect:PALMAlgorithm} a multiplier algorithm based on PALM.

\section{General duality scheme}
\label{sect:DualityScheme}

Let $(X, d_X)$ be a metric space, $Y$ be a normed space, $Q \subset X$ be a nonempty closed set, 
$f \colon X \to \mathbb{R} \cup \{ + \infty \}$ be a given function, and $\Phi \colon X \rightrightarrows Y$ be some
multifunction. Throughout the first part of this article we study primal-dual methods for solving the following
constrained optimization problem:
\[
  \min \: f(x) \quad \text{subject to} \quad 0 \in \Phi(x), \quad x \in Q.
  \qquad \eqno{(\mathcal{P})}
\]
We assume that the feasible region of this problem is nonempty and its optimal value, denoted by $f_*$, is finite. 

Let us introduce a general duality scheme for the problem $(\mathcal{P})$ that can be used to derive and analyse
many existing primal-dual optimization methods in a unified manner. Let $(M, d_M)$ be a metric space of parameters and
suppose that some function $\varphi \colon X \times M \to (- \infty, + \infty]$ is given. We \textit{formally} define 
\textit{a merit function} for the problem $(\mathcal{P})$ as follows:
\[
  F(x, \mu) := f(x) + \varphi(x, \mu) \quad x \in X, \: \mu \in M.
\]
Meanings of the function $\varphi$ and parameter $\mu$ depend on the context. If one wants to consider a penalty
method, then $\mu \ge 0$ is a penalty parameter and one can define, for example, 
$\varphi(x, \mu) = \mu \dist(0, \Phi(x))$. If one consideres an augmented Lagrangian method, then one can set 
$\mu = (\lambda, c)$, where $\lambda$ is a multiplier and $c$ is a penalty parameter. In this case the function
$\varphi(x, \mu)$ can be defined in many different ways. For example, if $\Phi$ is single-valued and $Y$ is a Hilbert
space, then one can define $\varphi(x, \mu) = \langle \lambda, \Phi(x) \rangle + 0.5 c \| \Phi(x) \|^2$, where 
$\mu = (\lambda, c)$, $\lambda \in Y$, and $\langle \cdot, \cdot \rangle$ is the inner product. 

In the general case, $\mu$ is simply a parameter of the merit function $F$, while 
$\varphi \colon X \times M \to (- \infty, + \infty]$ is a given function. No assumptions on this function are needed to
\textit{formally} define a duality scheme and a primal-dual method based on the merit function $F$. Some assumptions on
the function $\varphi$ will appear later as a part of convergence analysis of this method.

\textit{The dual problem} (associated with the merit function $F$) is defined in the following way:
\[
  \max_{\mu \in M} \: \Theta(\mu), \qquad \eqno{(\mathcal{D})}
\]
where
\[
  \Theta(\mu) := \inf_{x \in Q} F(x, \mu), \quad \mu \in M
\]
is \textit{the dual function} (associated with the merit function $F$). The optimal value of the dual problem is denoted
by $\Theta_*$. The following standard definition describes how the optimal values of the primal and dual problems should
be related to each other in order to make primal-dual methods based on the proposed duality scheme tractable.

\begin{definition}
One says that \textit{the weak duality} holds (for the pair of problems $(\mathcal{P})$ and $(\mathcal{D})$), if
$\Theta_* \le f_*$. One says that \textit{the strong duality} holds (or that \textit{the zero duality gap property}
holds true for the merit function $F$), if $\Theta_* = f_*$.
\end{definition}

Let us point out a natural assumption that ensures the weak duality.

\begin{lemma} \label{lem:WeakDuality}
Suppose that for any $\mu \in M$ and any feasible point $x$ of the problem $(\mathcal{P})$ one has 
$\varphi(x, \mu) \le 0$. Then the weak duality holds for the pair of problems $(\mathcal{P})$ and $(\mathcal{D})$.
\end{lemma}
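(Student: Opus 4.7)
The plan is to chain two elementary inequalities: one passing from the merit function down to the primal objective on the feasible set, and one passing from the infimum over $Q$ to the infimum over the feasible set. Both directions are forced by definitions, so the proof will be short and the only thing to check is that the hypothesis $\varphi(x,\mu) \le 0$ is invoked on the correct set of points.

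First I would fix an arbitrary $\mu \in M$ and bound the dual function value $\Theta(\mu)$. By definition, $\Theta(\mu) = \inf_{x \in Q} F(x,\mu)$, and since the feasible set of $(\mathcal{P})$ is contained in $Q$, the infimum over $Q$ is no larger than the infimum over the feasible set alone. For every feasible $x$ the hypothesis gives $\varphi(x,\mu) \le 0$, hence $F(x,\mu) = f(x) + \varphi(x,\mu) \le f(x)$. Combining these two observations yields
\[
\Theta(\mu) \;\le\; \inf_{x \in Q,\, 0 \in \Phi(x)} F(x,\mu) \;\le\; \inf_{x \in Q,\, 0 \in \Phi(x)} f(x) \;=\; f_*.
\]

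Finally, taking the supremum over $\mu \in M$ on the left-hand side, the right-hand side is independent of $\mu$, so $\Theta_* = \sup_{\mu \in M} \Theta(\mu) \le f_*$, which is the definition of weak duality.

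There is no real obstacle here: the statement is a direct consequence of the definitions, and the only subtlety is to keep track of whether an infimum ranges over all of $Q$ or only over feasible points. The nontriviality of the later convergence theory will come from the fact that $\varphi(\cdot,\mu) \le 0$ on the feasible set is a mild structural requirement satisfied by essentially all standard penalty and augmented Lagrangian constructions (e.g.\ $\mu\,\dist(0,\Phi(x))$ vanishes on the feasible set, and $\langle \lambda, \Phi(x)\rangle + 0.5c\|\Phi(x)\|^2$ vanishes when $\Phi(x) = 0$).
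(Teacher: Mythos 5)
Your proof is correct and follows exactly the same two-step argument as the paper: restrict the infimum defining $\Theta(\mu)$ from $Q$ to the feasible set, use $\varphi(x,\mu)\le 0$ there to get $F(x,\mu)\le f(x)$, and take the supremum over $\mu$. No differences worth noting.
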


\begin{proof}
Denote by $\Omega$ the feasible region of the problem $(\mathcal{P})$. By our assumption for any $x \in \Omega$ and 
$\mu \in M$ one has $F(x, \mu) \le f(x)$. Therefore
\[
  \Theta(\mu) := \inf_{x \in Q} F(x, \mu) \le \inf_{x \in \Omega} F(x, \mu) \le \inf_{x \in \Omega} f(x) = f_*
  \quad \forall \mu \in M,
\] 
which implies that $\Theta_* \le f_*$.
\end{proof}

\section{Inexact primal-dual method and its convergence analysis}
\label{sect:UnivConvTheorem}

Let us now turn to analysis of primal-dual methods for solving the problem $(\mathcal{P})$ that are based on the merit
function $F$. A large class of existing primal-dual penalty and augmented Lagrangian methods can be described with the
use of the following \textit{inexact primal-dual method} given in Algorithm~\ref{alg:PrimalDualMethod}. 

\begin{algorithm}
\caption{Inexact primal-dual method}

\textbf{Initialization.} Choose an initial value of the parameter $\mu_0 \in M$ and a bounded above sequence of
tolerances $\{ \varepsilon_n \} \subset [0, + \infty)$. Put $n = 0$.

\textbf{Step 1. Solution of subproblem.} Find an $\varepsilon_n$-optimal solution $x_n$ of the problem
\[
  \min_x \: F(x, \mu_n) \quad \text{subject to} \quad x \in Q,
\]
that is, find $x_n \in Q$ such that $F(x_n, \mu_n) \le F(x, \mu_n) + \varepsilon_n$ for all $x \in Q$.

\textbf{Step 2. Update of parameters.} Choose some $\mu_{n + 1} \in M$, increment $n$, and go to
\textbf{Step 1}.
\label{alg:PrimalDualMethod}
\end{algorithm}

Throughout this article we impose the following assumption on sequences generated by primal-dual methods that is needed 
to ensure that the primal sequence $\{ x_n \}$ is correctly defined.

\begin{assumption} \label{as:PrimalSequenceWellDefined}
For any $n \in \mathbb{N}$ the function $F(\cdot, \mu_n)$ is bounded below on $Q$.
\end{assumption}

It should be noted that we do not assume that $\varepsilon_n \to 0$ as $n \to \infty$, that is, we do not assume that
an approximate solution of the subproblem
\begin{equation} \label{eq:MeritFuncMinProblem}
  \min_x \: F(x, \mu_n) \quad \text{subject to} \quad x \in Q
\end{equation}
can be computed with arbitrarily high precision, as it is often done in the literature (see, e.g.
\cite{LuoSunLi,WangLi2009,BurachickIusemMelo_SharpLagr,Burachick2011,BurachikIusemMelo2013}). The value
$\varepsilon_n > 0$ represents not how accurately one \textit{needs} to solve problem \eqref{eq:MeritFuncMinProblem},
but rather how accurately one \textit{can} solve it in a reasonable amount of time. The assumption that 
$\varepsilon_n \to 0$ as $n \to \infty$ is purely theoretical in nature and is needed only for better general
understanding of asymptotic behaviour of sequences generated by primal-dual methods. That is why in the case when
$\varepsilon_n \to 0$ as $n \to \infty$ we call Algorithm~\ref{alg:PrimalDualMethod} \textit{the asymptotically exact
primal-dual method}.

\begin{remark}
Throughout this article we are interested only in the convergence analysis of infinite sequences generated by
primal-dual penalty and augmented Lagrangian methods. Therefore, we do not include stopping criteria in the descriptions
of these methods and always assume that a method under consideration generates an infinite sequence of points whose
convergence to (approximately) globally optimal solutions we are aiming to prove. Natural stopping criteria for these
methods can be easily designed using the results of our convergence analysis.
\end{remark}

Our aim is to prove a general convergence theorem for the inexact primal-dual method, which we call \textit{a universal
convergence theorem}. This theorem reduces convergence analysis of primal-dual methods to verification of some natural
conditions. In the following section we will demonstrate that in many particular cases these conditions can be verified
in a straightforward manner.

The fundamental assumption that one must impose on primal-dual methods is the decay of the infeasibility measure. 
If the infeasibility measure does not converge to zero, then the corresponding method fails to solve an optimization
problem to which it was applied. In our case, the decay of the infeasibility measure can be expressed as an assumption
that $\dist(0, \Phi(x_n)) \to 0$ as $n \to \infty$, where $\dist(0, \Phi(x_n)) = \inf_{y \in \Phi(x_n)} \| y \|$ is 
the distance between $0$ and $\Phi(x_n)$. This condition along with an additional assumption on the sequence 
$\{ \varphi(x_n, \mu_n) \}$ are, roughly speaking, enough to draw multiple conclusions about the behaviour of 
the sequence $\{ (x_n, \mu_n) \}$ generated by the inexact primal-dual method. To conveniently formulate them, recall
that the function
\[
  \beta(y) = \inf\Big\{ f(x) \Bigm| x \in Q \colon 0 \in \Phi(x) - y \Big\}
\]
is called \textit{the optimal value (perturbation) function} for the problem $(\mathcal{P})$. In addition, denote 
$\varepsilon_* = \limsup_{n \to \infty} \varepsilon_n$. Note that $0 \le \varepsilon_* < + \infty$, since by our
assumption the sequence $\{ \varepsilon_n \}$ is bounded above.

\begin{lemma} \label{lem:InexactPDMLimits}
Let the weak duality hold and $\{ (x_n, \mu_n) \}$ be the sequence generated by the inexact primal-dual method. 
Suppose that 
\begin{equation} \label{eq:BasicAssumptions}
  \lim_{n \to \infty} \dist(0, \Phi(x_n)) = 0, \quad 
  \liminf_{n \to \infty} \varphi(x_n, \mu_n) \ge 0.
\end{equation}
Then the following inequalities hold true:
\begin{align} \label{eq:ObjFuncInexactLim}
  \min\Big\{ f_*, \liminf_{y \to 0} \beta(y) \Big\} \le \liminf_{n \to \infty} f(x_n)
  &\le \limsup_{n \to \infty} f(x_n) \le f_* + \varepsilon_*,
  \\ \label{eq:MeritFuncInexactLim}
  \min\Big\{ f_*, \liminf_{y \to 0} \beta(y) \Big\} \le \liminf_{n \to \infty} F(x_n, \mu_n)
  &\le \limsup_{n \to \infty} F(x_n, \mu_n) \le f_* + \varepsilon_*,
  \\ \label{eq:DualFuncInexactLim}
  \min\Big\{ f_*, \liminf_{y \to 0} \beta(y) \Big\} - \varepsilon_* \le \liminf_{n \to \infty} \Theta(\mu_n)
  &\le \limsup_{n \to \infty} \Theta(\mu_n) \le f_*.
\end{align}
\end{lemma}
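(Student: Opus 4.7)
The plan is to exploit three simple ingredients: (i) the two-sided relation $\Theta(\mu_n) \le F(x_n,\mu_n) \le \Theta(\mu_n) + \varepsilon_n$ coming from the $\varepsilon_n$-optimality of $x_n$, (ii) the identity $F(x_n,\mu_n) = f(x_n) + \varphi(x_n,\mu_n)$ together with the hypothesis $\liminf \varphi(x_n,\mu_n) \ge 0$, and (iii) the fact that the points $x_n$ are almost feasible for a nearby perturbation of $(\mathcal{P})$, so $f(x_n)$ can be bounded below by a value of $\beta$ at a point approaching $0$. Once these three facts are in place, all six inequalities reduce to routine bookkeeping with $\liminf/\limsup$.

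First I would dispose of the three upper bounds. By weak duality $\Theta(\mu_n) \le \Theta_* \le f_*$ for every $n$, so the upper bound in \eqref{eq:DualFuncInexactLim} is immediate. Combining this with the inequality $F(x_n,\mu_n) \le \Theta(\mu_n) + \varepsilon_n$ from $\varepsilon_n$-optimality gives $F(x_n,\mu_n) \le f_* + \varepsilon_n$, and passing to the $\limsup$ yields the upper bound in \eqref{eq:MeritFuncInexactLim}. For the upper bound in \eqref{eq:ObjFuncInexactLim} I would write $f(x_n) = F(x_n,\mu_n) - \varphi(x_n,\mu_n)$ and use $\limsup(a_n - b_n) \le \limsup a_n - \liminf b_n$, invoking $\liminf \varphi(x_n,\mu_n) \ge 0$ to discard the second term.

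For the lower bounds the engine is the perturbation function. For each sufficiently large $n$ I would pick $y_n \in \Phi(x_n)$ with $\| y_n \| \le \dist(0,\Phi(x_n)) + 1/n$, so that $y_n \to 0$; since $x_n \in Q$ satisfies $0 \in \Phi(x_n) - y_n$, the definition of $\beta$ gives $f(x_n) \ge \beta(y_n)$. Splitting the indices according to whether $y_n = 0$ or $y_n \ne 0$, the contribution of the first set is bounded below by $\beta(0) = f_*$ while the contribution of the second is bounded below by $\liminf_{y \to 0} \beta(y)$; the combined lower bound on $\liminf \beta(y_n)$ is therefore $\min\{f_*, \liminf_{y \to 0} \beta(y)\}$, which settles the lower bound in \eqref{eq:ObjFuncInexactLim}. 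The lower bound in \eqref{eq:MeritFuncInexactLim} then follows from $F(x_n,\mu_n) = f(x_n) + \varphi(x_n,\mu_n)$ together with $\liminf \varphi(x_n,\mu_n) \ge 0$, and finally the lower bound in \eqref{eq:DualFuncInexactLim} follows from $\Theta(\mu_n) \ge F(x_n,\mu_n) - \varepsilon_n$ and $\liminf(a_n - b_n) \ge \liminf a_n - \limsup b_n$, which produces the extra $-\varepsilon_*$ term.

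The only delicate point I anticipate is the case-split used to obtain the lower bound for $\liminf f(x_n)$: one has to be careful because the standard reading of $\liminf_{y \to 0}\beta(y)$ excludes $y = 0$, yet the $y_n$ produced by the almost-feasibility argument may hit $0$, and this is precisely what forces the $\min$ with $f_* = \beta(0)$ to appear. Everything else is a mechanical application of the sub/super-additivity of $\liminf$ and $\limsup$ and of the two sandwich inequalities coming from weak duality and $\varepsilon_n$-optimality.
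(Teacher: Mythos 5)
Your proof is correct and takes essentially the same route as the paper's: the three upper bounds come from weak duality combined with the sandwich $\Theta(\mu_n)\le F(x_n,\mu_n)\le\Theta(\mu_n)+\varepsilon_n$, and the lower bounds come from $f(x_n)\ge\beta(y_n)$ for near-zero $y_n\in\Phi(x_n)$. Your case split on $y_n=0$ versus $y_n\ne 0$ is just a repackaging of the paper's split into feasible and infeasible subsequences, and it correctly identifies why the minimum with $f_*=\beta(0)$ must appear.
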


\begin{proof}
\textbf{Part 1. Upper estimates.} Since the weak duality holds, by the definition of $x_n$ one has
\[
  f_* \ge \Theta_* \ge \Theta(\mu_n) \ge F(x_n, \mu_n) - \varepsilon_n = f(x_n) + \varphi(x_n, \mu_n) - \varepsilon_n
  \quad \forall n \in \mathbb{N},
\]
which yields
\[
  \limsup_{n \to \infty} \Theta(\mu_n) \le f_*, \quad \limsup_{n \to \infty} F(x_n, \mu_n) \le f_* + \varepsilon_*
\]
and
\begin{align*}
  \limsup_{n \to \infty} f(x_n) &\le \limsup_{n \to \infty} \big( f_* + \varepsilon_n - \varphi(x_n, \mu_n) \big)
  \\
  &\le f_* + \limsup_{n \to \infty} \varepsilon_n + \limsup_{n \to \infty} (- \varphi(x_n, \mu_n))
  \\
  &= f_* + \varepsilon_* - \liminf_{n \to \infty} \varphi(x_n, \mu_n) \le f_* + \varepsilon_*.
\end{align*}
that is, the upper estimates in \eqref{eq:ObjFuncInexactLim}--\eqref{eq:DualFuncInexactLim} are satisfied.

\textbf{Part 2. Lower estimates.} Choose any subsequence $\{ x_{n_k} \}$ such that 
\[
  \lim_{k \to \infty} f(x_{n_k}) = \liminf_{n \to \infty} f(x_n).
\]
At least one such subsequence exists by the definition of lower limit. Let us consider two cases.

\textbf{Case I.} Suppose that there exists a subsequence of the sequence $\{ x_{n_k} \}$, which we denote again by
$\{ x_{n_k} \}$, that is feasible for the problem $(\mathcal{P})$. Then $f(x_{n_k}) \ge f_*$ for all $k \in \mathbb{N}$
and
\[
  \liminf_{n \to \infty} f(x_n) = \lim_{k \to \infty} f(x_{n_k}) \ge f_*.
\]
\textbf{Case II.} Suppose that a feasible subsequence of the sequence $\{ x_{n_k} \}$ does not exist. Then without loss
of generality one can suppose that every element of the sequence $\{ x_{n_k} \}$ is infeasible. By the definition of 
optimal value function for any $k \in \mathbb{N}$ and $y_k \in Y$ such that $0 \in \Phi(x_{n_k}) - y_k$ one has
$f(x_{n_k}) \ge \beta(y_k)$. Recall that $\dist(0, \Phi(x_n)) \to 0$ as $n \to \infty$ by our assumption. Consequently,
one can find a sequence $\{ y_k \} \subset Y$ such that $0 \in \Phi(x_{n_k}) - y_k$ for all $k \in \mathbb{N}$ and 
$\| y_k \| \to 0$ as $k \to \infty$. Therefore
\[
  \liminf_{n \to \infty} f(x_n) = \lim_{k \to \infty} f(x_{n_k}) \ge \liminf_{k \to \infty} \beta(y_k) 
  \ge \liminf_{y \to 0} \beta(y).
\]
Combining the two cases one can conclude that the lower estimate in \eqref{eq:ObjFuncInexactLim} holds true. Bearing in
mind the second condition in \eqref{eq:BasicAssumptions} one gets
\[
  \liminf_{n \to \infty} F(x_n, \mu_n) = \liminf_{n \to \infty} \big( f(x_n) + \varphi(x_n, \mu_n) \big)
  \ge \liminf_{n \to \infty} f(x_n),
\]
that is, the lower estimate in \eqref{eq:MeritFuncInexactLim} is valid. Finally, by the definition of $x_n$ one has
\[
  \liminf_{n \to \infty} \Theta(\mu_n) \ge \liminf_{n \to \infty} (F(x_n, \mu_n) - \varepsilon_n)
  \ge \liminf_{n \to \infty} F(x_n, \mu_n) - \varepsilon_*,
\]
which along with the first inequality in \eqref{eq:MeritFuncInexactLim} implies that the lower estimate in
\eqref{eq:DualFuncInexactLim} holds true.
\end{proof}

One can strengthen the claim of the previous lemma with the use of an additional technical assumption on 
the sequence $\{ \mu_n \}$/the function $\varphi$ that, as we will show below, can be easily verified in many
particular cases.

\begin{definition} \label{def:PhiConvToZero}
A sequence $\{ \mu_n \} \subset M$ is said to satisfy \textit{the $\varphi$-convergence to zero condition}, if
there exists a sequence $\{ t_n \} \subset (0, + \infty)$ such that for any sequence $\{ z_n \} \subset Q$ satisfying
the inequality $\dist(0, \Phi(z_n)) \le t_n$ for all $n \in \mathbb{N}$ one has 
$\limsup_{n \to \infty} \varphi(z_n, \mu_n) \le 0$.
\end{definition}

\begin{corollary} \label{crlr:InexactPDMLimits_Improved}
Let the assumptions of Lemma~\ref{lem:InexactPDMLimits} hold true and suppose that the sequence $\{ \mu_n \}$ satisfies 
the $\varphi$-convergence to zero condition. Then one has $\limsup_{n \to \infty} \varphi(x_n, \mu_n) \le \varepsilon_*$
and the following inequalities hold true:
\begin{align} \label{eq:ObjFuncInexactLim_Improved}
  \vartheta_* \le \liminf_{n \to \infty} f(x_n) &\le \limsup_{n \to \infty} f(x_n) \le \vartheta_* + \varepsilon_*,
  \\ \label{eq:MeritFuncInexactLim_Improved}
  \vartheta_* \le \liminf_{n \to \infty} F(x_n, \mu_n)
  &\le \limsup_{n \to \infty} F(x_n, \mu_n) \le \vartheta_* + \varepsilon_*,
  \\ \label{eq:DualFuncInexactLim_Improved}
  \vartheta_* - \varepsilon_* \le \liminf_{n \to \infty} \Theta(\mu_n)
  &\le \limsup_{n \to \infty} \Theta(\mu_n) \le \vartheta_*.
\end{align}
where $\vartheta_* = \min\{ f_*, \liminf_{y \to 0} \beta(y) \}$.
\end{corollary}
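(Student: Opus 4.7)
The strategy is to sharpen the conclusions of Lemma~\ref{lem:InexactPDMLimits} by comparing the merit value $F(x_n,\mu_n)$, via the $\varepsilon_n$-optimality of $x_n$, with the merit value $F(z_n,\mu_n)$ at a judiciously chosen test sequence $\{z_n\} \subset Q$. This sequence should (a) satisfy $\dist(0,\Phi(z_n)) \le t_n$ in order to trigger the $\varphi$-convergence to zero condition of Definition~\ref{def:PhiConvToZero}, which hands us $\limsup_n \varphi(z_n,\mu_n) \le 0$ for free, and (b) satisfy $\limsup_n f(z_n) \le \vartheta_* + \eta$ for any prescribed $\eta > 0$. Once such $\{z_n\}$ is in hand, the rest is routine $\limsup$ manipulation.

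To construct $\{z_n\}$ I would distinguish two cases. If $\vartheta_* = f_*$ it suffices to take a constant sequence $z_n \equiv z^*$, where $z^*$ is a feasible point with $f(z^*) \le f_* + \eta$. Otherwise $\vartheta_* = \liminf_{y \to 0}\beta(y) < f_*$, and for each $n$ the definition of $\liminf$ furnishes some $y_n \in Y$ with $\|y_n\| \le \min\{t_n, 1/n\}$ and $\beta(y_n) < \vartheta_* + \eta$; choosing an almost-infimizer $z_n \in Q$ with $y_n \in \Phi(z_n)$ and $f(z_n) \le \beta(y_n) + 1/n$ yields $\dist(0,\Phi(z_n)) \le \|y_n\| \le t_n$ by construction.

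With the test sequence available, the comparison $F(x_n,\mu_n) \le F(z_n,\mu_n) + \varepsilon_n$ can be rewritten either as
\[
  \varphi(x_n,\mu_n) \le \bigl( f(z_n) - f(x_n) \bigr) + \varphi(z_n,\mu_n) + \varepsilon_n
\]
or as
\[
  f(x_n) \le f(z_n) + \varphi(z_n,\mu_n) - \varphi(x_n,\mu_n) + \varepsilon_n.
\]
Taking $\limsup$ in the first inequality and invoking the lower estimate $\liminf_n f(x_n) \ge \vartheta_*$ from Lemma~\ref{lem:InexactPDMLimits} gives $\limsup_n \varphi(x_n,\mu_n) \le \eta + \varepsilon_*$; taking $\limsup$ in the second and using the hypothesis $\liminf_n \varphi(x_n,\mu_n) \ge 0$ gives $\limsup_n f(x_n) \le \vartheta_* + \eta + \varepsilon_*$. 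The improved upper bound on $\limsup_n F(x_n,\mu_n)$ is read off from $F(x_n,\mu_n) \le f(z_n) + \varphi(z_n,\mu_n) + \varepsilon_n$ directly, while the one on $\limsup_n \Theta(\mu_n)$ follows from $\Theta(\mu_n) \le F(z_n,\mu_n)$. Sending $\eta \to 0^+$ yields the three claimed upper estimates, and the lower estimates are inherited verbatim from Lemma~\ref{lem:InexactPDMLimits}. The main obstacle is the construction of $\{z_n\}$ in the second case: a priori the sequences $y_k \to 0$ approximating $\liminf_{y\to 0}\beta(y)$ have nothing to do with the tolerance sequence $\{t_n\}$ supplied by Definition~\ref{def:PhiConvToZero}, but this is resolved by the observation that $\liminf$ at $0$ is witnessed in every neighbourhood of $0$, so the norms $\|y_n\|$ may be forced to decay as quickly as we like.
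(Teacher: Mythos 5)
Your proposal is correct and follows essentially the same route as the paper: both construct a test sequence $\{z_n\} \subset Q$ that nearly realizes $\liminf_{y\to 0}\beta(y)$ while satisfying $\dist(0,\Phi(z_n)) \le t_n$ (the paper does this by passing to a subsequence $\{y_{k_n}\}$ with $\|y_{k_n}\| \le t_n$, you by choosing $\|y_n\| \le \min\{t_n, 1/n\}$ directly), invoke the $\varphi$-convergence to zero condition to get $\limsup_n \varphi(z_n,\mu_n) \le 0$, and then extract all upper estimates from the comparison $\Theta(\mu_n) \le F(z_n,\mu_n)$ and the $\varepsilon_n$-optimality of $x_n$. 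The only cosmetic differences are your $\eta$-slack argument in place of the paper's exact limiting subsequence and the order in which the bounds on $\Theta$, $F$, $f$, and $\varphi$ are cascaded.
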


\begin{proof}
Denote $\beta_* = \liminf_{y \to 0} \beta(y)$. If $f_* \le \beta_*$, then the claim of the corollary follows directly 
from Lemma~\ref{lem:InexactPDMLimits}. Therefore, suppose that $\beta_* < f_*$. Note that the lower estimates in
\eqref{eq:ObjFuncInexactLim_Improved}--\eqref{eq:DualFuncInexactLim_Improved} also follow from 
Lemma~\ref{lem:InexactPDMLimits}, which means that we need to prove only the upper estimates.

By the definition of lower limit there exists a sequence $\{ y_k \} \subset Y$ such that $y_k \to 0$ 
and $\beta(y_k) \to \beta_*$ as $k \to \infty$. Since the sequence $\{ y_k \}$ converges to zero, there exists a
subsequence $\{ y_{k_n} \}$ such that $\| y_{k_n} \| \le t_n$ for all $n \in \mathbb{N}$, where $t_n$ is from 
Definition~\ref{def:PhiConvToZero}. In turn, with the use of the definition of the optimal value function $\beta$ 
one can find a sequence $\{ z_n \} \subset Q$ such that 
\[
  0 \in \Phi(z_n) - y_{k_n} \quad \forall n \in \mathbb{N}, \quad 
  \lim_{n \to \infty} f(z_n) = \lim_{n \to \infty} \beta(y_{k_n}).
\]
Note that $\dist(0, \Phi(z_n)) \le \| y_{k_n} \| \le t_n$ for all $n \in \mathbb{N}$. Hence by the 
$\varphi$-convergence to zero condition one has $\limsup_{n \to \infty} \varphi(z_n, \mu_n) \le 0$ and
\begin{align*}
  \limsup_{n \to \infty} \Theta(\mu_n) \le \limsup_{n \to \infty} F(z_n, \mu_n)
  &\le \limsup_{n \to \infty} f(z_n) + \limsup_{n \to \infty} \varphi(z_n, \mu_n)
  \\
  &\le \limsup_{n \to \infty} f(z_n) = \lim_{n \to \infty} \beta(y_{k_n}) = \beta_*,
\end{align*}
which means that the upper estimate in \eqref{eq:DualFuncInexactLim_Improved} holds true.

Recall that $F(x_n, \mu_n) \le \Theta(\mu_n) + \varepsilon_n$ for all $n \in \mathbb{N}$ by the definition of $x_n$.
Consequently, the upper estimate in \eqref{eq:MeritFuncInexactLim_Improved} follows directly from the upper estimate 
in \eqref{eq:DualFuncInexactLim_Improved}. With the use of this upper estimate and the second inequality in 
\eqref{eq:BasicAssumptions} one obtains
\begin{align*}
  \limsup_{n \to \infty} f(x_n) &= \limsup_{n \to \infty} \Big( F(x_n, \mu_n) - \varphi(x_n, \mu_n) \Big)
  \\
  &\le \limsup_{n \to \infty} F(x_n, \mu_n) + \limsup_{n \to \infty} \big( - \varphi(x_n, \mu_n) \big)
  \\
  &\le \vartheta_* + \varepsilon_* - \liminf_{n \to \infty} \varphi(x_n, \mu_n)
  \le \vartheta_* + \varepsilon_*,
\end{align*}
that is, the upper estimate in \eqref{eq:ObjFuncInexactLim_Improved} is satisfied. Finally, note that 
\begin{align*}
  \limsup_{n \to \infty} \varphi(x_n, \mu_n) &= \limsup_{n \to \infty} \Big( F(x_n, \mu_n) - f(x_n) \Big)
  \\
  &\le \limsup_{n \to \infty} F(x_n, \mu_n) - \liminf_{n \to \infty} f(x_n)
  \le \vartheta_* + \varepsilon_* - \vartheta_* = \varepsilon_*,
\end{align*}
which completes the proof of the corollary.
\end{proof}

With the use of Lemma~\ref{lem:InexactPDMLimits} and Corollary~\ref{crlr:InexactPDMLimits_Improved} we can prove 
a general convergence theorem for the inexact primal-dual method (Algorithm~\ref{alg:PrimalDualMethod}), which we call
the universal convergence theorem.

\begin{theorem}[universal convergence theorem for inexact primal-dual methods] \label{thrm:UnivConvThrmInexact}
Let the weak duality hold and $\{ (x_n, \mu_n) \}$ be the sequence generated by the inexact primal-dual method. Suppose
that 
\[
  \lim_{n \to \infty} \dist(0, \Phi(x_n)) = 0, \quad 
  \liminf_{n \to \infty} \varphi(x_n, \mu_n) \ge 0.
\]
Then 
\begin{align} \label{eq:ObjFuncInexactLim_StrDual}
  \underline{\vartheta}_* \le \liminf_{n \to \infty} f(x_n)
  &\le \limsup_{n \to \infty} f(x_n) \le \overline{\vartheta}_* + \varepsilon_*,
  \\ \label{eq:MeritFuncInexactLim_StrDual}
  \underline{\vartheta}_* \le \liminf_{n \to \infty} F(x_n, \mu_n)
  &\le \limsup_{n \to \infty} F(x_n, \mu_n) \le \overline{\vartheta}_* + \varepsilon_*,
  \\ \label{eq:DualFuncInexactLim_StrDual}
  \underline{\vartheta}_* - \varepsilon_* \le \liminf_{n \to \infty} \Theta(\mu_n)
  &\le \limsup_{n \to \infty} \Theta(\mu_n) \le \overline{\vartheta}_*,
\end{align}
where $\underline{\vartheta}_* = \min\{ f_*, \liminf_{y \to 0} \beta(y) \}$,
$\overline{\vartheta}_* = \underline{\vartheta}_*$, if the sequence $\{ \mu_n \}$ satisfies the $\varphi$-convergence to
zero condition, and $\overline{\vartheta}_* = f_*$ otherwise.

In addition, if the optimal value function $\beta$ is lower semicontinuous at the origin, then 
$\underline{\vartheta}_* = \overline{\vartheta}_* = f_*$, 
$\limsup_{n \to \infty} \varphi(x_n, \mu_n) \le \varepsilon_*$, and the two following statements hold true:
\begin{enumerate}
\item{if the functions $f$ and $\dist(0, \Phi(\cdot))$ are lsc on $Q$ and the set-valued map $\Phi$ takes closed values
on the set $Q$, then any limit point of the sequence $\{ x_n \}$ (if such point exists) is an $\varepsilon_*$-optimal
solution of the problem $(\mathcal{P})$;
}

\item{if the function $\Theta$ is upper semicontinuous (usc), then any limit point of the sequence $\{ \mu_n \}$ (if
such point exists) is an $\varepsilon_*$-optimal solution of the dual problem $(\mathcal{D})$.
}
\end{enumerate}
\end{theorem}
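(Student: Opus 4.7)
My plan is to read the three-line block \eqref{eq:ObjFuncInexactLim_StrDual}--\eqref{eq:DualFuncInexactLim_StrDual} essentially off Lemma~\ref{lem:InexactPDMLimits} and Corollary~\ref{crlr:InexactPDMLimits_Improved}, then handle the lower-semicontinuity addendum and the two limit-point statements with short topological arguments.

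The hypotheses of the theorem coincide exactly with those of Lemma~\ref{lem:InexactPDMLimits}, so that lemma immediately delivers \eqref{eq:ObjFuncInexactLim_StrDual}--\eqref{eq:DualFuncInexactLim_StrDual} in the case $\overline{\vartheta}_*=f_*$, i.e.\ when the $\varphi$-convergence to zero condition is not assumed. If that condition does hold, Corollary~\ref{crlr:InexactPDMLimits_Improved} sharpens the upper bounds to $\vartheta_*=\underline{\vartheta}_*$, and we are again done. Thus the three-line block is really a bookkeeping consequence of the two preceding results.

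Next, assume $\beta$ is lsc at the origin. Since the feasible set of $(\mathcal{P})$ is nonempty, the definition of $\beta$ gives $\beta(0)=f_*$, and lsc at $0$ then forces $\liminf_{y\to 0}\beta(y)\ge f_*$. Hence $\underline{\vartheta}_*=\min\{f_*,\liminf_{y\to 0}\beta(y)\}=f_*$; the upper bound in Lemma~\ref{lem:InexactPDMLimits} is already at most $f_*$, so $\overline{\vartheta}_*=f_*$ as well. The remaining bound $\limsup_{n\to\infty}\varphi(x_n,\mu_n)\le\varepsilon_*$ then drops out of writing $\varphi(x_n,\mu_n)=F(x_n,\mu_n)-f(x_n)$ and sandwiching with the $F$-upper and $f$-lower estimates just obtained, exactly as in the closing computation of the proof of Corollary~\ref{crlr:InexactPDMLimits_Improved}.

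For the two limit-point statements I would argue as follows. In (i), if $x_{n_k}\to x^*$, closedness of $Q$ yields $x^*\in Q$; lower semicontinuity of $\dist(0,\Phi(\cdot))$ combined with $\dist(0,\Phi(x_n))\to 0$ forces $\dist(0,\Phi(x^*))=0$, and closedness of $\Phi(x^*)$ upgrades this to $0\in\Phi(x^*)$, so $x^*$ is feasible; lsc of $f$ and \eqref{eq:ObjFuncInexactLim_StrDual} then give $f(x^*)\le\liminf_k f(x_{n_k})\le f_*+\varepsilon_*$. In (ii), if $\mu_{n_k}\to\mu^*$, usc of $\Theta$ yields $\Theta(\mu^*)\ge\limsup_k\Theta(\mu_{n_k})\ge\liminf_n\Theta(\mu_n)\ge f_*-\varepsilon_*\ge\Theta_*-\varepsilon_*$ by \eqref{eq:DualFuncInexactLim_StrDual} and weak duality, which is $\varepsilon_*$-optimality for $(\mathcal{D})$.

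I do not anticipate a genuine obstacle, since the analytic work is already packaged in Lemma~\ref{lem:InexactPDMLimits} and Corollary~\ref{crlr:InexactPDMLimits_Improved}. The one subtle point is the direction of the inequality at the origin: lsc of $\beta$ gives only $\liminf_{y\to 0}\beta(y)\ge\beta(0)=f_*$, but this is precisely what collapses $\underline{\vartheta}_*$ onto $f_*$ and forces the two bounds to coincide.
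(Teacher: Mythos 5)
Your proposal is correct and follows essentially the same route as the paper: the three displayed inequalities are read off from Lemma~\ref{lem:InexactPDMLimits} and Corollary~\ref{crlr:InexactPDMLimits_Improved}, the bound $\limsup_{n\to\infty}\varphi(x_n,\mu_n)\le\varepsilon_*$ comes from the same sandwich $F(x_n,\mu_n)=f(x_n)+\varphi(x_n,\mu_n)$, and the two limit-point claims are handled by the identical closedness/semicontinuity arguments. No gaps.
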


\begin{proof}
Inequalities \eqref{eq:ObjFuncInexactLim_StrDual}--\eqref{eq:DualFuncInexactLim_StrDual} follow directly from 
Lemma~\ref{lem:InexactPDMLimits} and Corollary~\ref{crlr:InexactPDMLimits_Improved}.

Suppose that $\beta$ is lsc at the origin. By the definition of optimal value function $\beta(0) = f_*$. Therefore, 
the lower semicontinuity assumption is equivalent to the condition $\min\{ f_*, \lim_{y \to 0} \beta(y) \} = f_*$, 
which yields $\underline{\vartheta}_* = \overline{\vartheta}_* = f_*$.

With the use of inequalities \eqref{eq:MeritFuncInexactLim_StrDual} and \eqref{eq:ObjFuncInexactLim_StrDual} one gets
\begin{align*}
  f_* + \varepsilon_* \ge \limsup_{n \to \infty} F(x_n, \mu_n) 
  &\ge \liminf_{n \to \infty} f(x_n) + \limsup_{n \to \infty} \varphi(x_n, \mu_n)
  \\
  &\ge f_* + \limsup_{n \to \infty} \varphi(x_n, \mu_n),
\end{align*}
which implies that $\limsup_{n \to \infty} \varphi(x_n, \mu_n) \le \varepsilon_*$.

Let $x_*$ be a limit point of the sequence $\{ x_n \}$, that is, some subsequence $\{ x_{n_k} \}$ converges to $x_*$.
Note that $x_* \in Q$, since $Q$ is a closed set. In addition, $\dist(0, \Phi(x_*)) = 0$ due to the facts that 
$\dist(0, \Phi(x_n)) \to 0$ as $n \to \infty$ and the function $\dist(0, \Phi(\cdot))$ is lsc. Hence bearing in mind
the fact that the set $\Phi(x_*)$ is closed by our assumption one can conclude that $x_*$ is a feasible point of the
problem $(\mathcal{P})$. Moreover, by the upper estimate in \eqref{eq:ObjFuncInexactLim_StrDual} and the lower
semicontinuity of $f$ one gets that $f(x_*) \le f_* + \varepsilon_*$, that is, $x_*$ is an $\varepsilon_*$-optimal
solution of the problem $(\mathcal{P})$. The statement about limit points of the sequence $\{ \mu_n \}$ is proved in
the same way.
\end{proof}

For the sake of completeness, let us also point out how the previous theorem can be strengthened in the case of 
the asymptotically exact primal-dual method, that is, in the case when $\varepsilon_n \to 0$ as $n \to \infty$
(or, equivalently, $\varepsilon_* = 0$).

\begin{theorem}[universal convergence theorem for asymptotically exact primal-du\-al methods]
\label{thrm:UnivConvThrmAsympExact}
Let the weak duality hold, the optimal value function $\beta$ be lsc at the origin, and $\{ (x_n, \mu_n) \}$ be 
the sequence generated by the asymptotically exact primal-dual method. Suppose that 
\[
  \lim_{n \to \infty} \dist(0, \Phi(x_n)) = 0, \quad 
  \liminf_{n \to \infty} \varphi(x_n, \mu_n) \ge 0.
\]
Then the strong duality holds, $\lim_{n \to \infty} \varphi(x_n, \mu_n) = 0$, and
\begin{equation} \label{eq:AsymptExactMethod_Limits}
  f_* = \lim_{n \to \infty} f(x_n) = \lim_{n \to \infty} F(x_n, \mu_n) = \lim_{n \to \infty} \Theta(\mu_n) = \Theta_*. 
\end{equation}
In addition, the following statements hold true:
\begin{enumerate}
\item{if the functions $f$ and $\dist(0, \Phi(\cdot))$ are lsc on $Q$ and the set-valued map $\Phi$ takes closed values 
on the set $Q$, then any limit point of the sequence $\{ x_n \}$ (if such point exists) is a globally optimal solution 
of the problem $(\mathcal{P})$;
}

\item{if the function $\Theta$ is usc, then any limit point of the sequence $\{ \mu_n \}$ (if such point exists) is 
a globally optimal solution of the dual problem $(\mathcal{D})$.}
\end{enumerate}
\end{theorem}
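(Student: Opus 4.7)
The plan is to derive this theorem as an almost immediate corollary of Theorem~\ref{thrm:UnivConvThrmInexact} by exploiting the two simplifications afforded by the asymptotically exact setting: $\varepsilon_* = 0$ and the lower semicontinuity of $\beta$ at the origin. First I would observe that, by the definition of the optimal value function, $\beta(0) = f_*$, so lsc of $\beta$ at $0$ gives $\liminf_{y \to 0} \beta(y) \ge f_*$ and hence $\underline{\vartheta}_* = f_*$. Since $\overline{\vartheta}_*$ equals either $\underline{\vartheta}_*$ or $f_*$ depending on whether the $\varphi$-convergence to zero condition holds, in both cases $\overline{\vartheta}_* = f_*$. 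Thus the bounds in \eqref{eq:ObjFuncInexactLim_StrDual}--\eqref{eq:DualFuncInexactLim_StrDual} collapse to bracketings of the form $f_* \le \liminf \le \limsup \le f_* + \varepsilon_*$ and $f_* - \varepsilon_* \le \liminf \Theta(\mu_n) \le \limsup \Theta(\mu_n) \le f_*$.

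Next I would set $\varepsilon_* = 0$ and read off from these collapsed bracketings the three equalities
\[
  f_* = \lim_{n \to \infty} f(x_n) = \lim_{n \to \infty} F(x_n, \mu_n) = \lim_{n \to \infty} \Theta(\mu_n),
\]
which gives most of \eqref{eq:AsymptExactMethod_Limits}. Strong duality would then follow by combining $\Theta_* \ge \Theta(\mu_n)$ for all $n$, which yields $\Theta_* \ge \lim \Theta(\mu_n) = f_*$, with the weak duality hypothesis $\Theta_* \le f_*$; hence $\Theta_* = f_*$. To prove $\lim \varphi(x_n, \mu_n) = 0$, I would use the final assertion in Theorem~\ref{thrm:UnivConvThrmInexact} giving $\limsup \varphi(x_n, \mu_n) \le \varepsilon_* = 0$, together with the standing hypothesis $\liminf \varphi(x_n, \mu_n) \ge 0$.

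For the two statements about limit points, the argument is already contained in the corresponding parts of Theorem~\ref{thrm:UnivConvThrmInexact}: with $\varepsilon_* = 0$, any $\varepsilon_*$-optimal solution of $(\mathcal{P})$ (respectively of $(\mathcal{D})$) is simply a globally optimal solution. Concretely, for a limit point $x_*$ of $\{x_n\}$ along a subsequence $\{x_{n_k}\}$, closedness of $Q$ gives $x_* \in Q$, lsc of $\dist(0, \Phi(\cdot))$ together with $\dist(0, \Phi(x_n)) \to 0$ gives $\dist(0, \Phi(x_*)) = 0$, and closedness of $\Phi(x_*)$ then yields $0 \in \Phi(x_*)$; the lsc of $f$ combined with the upper estimate $\limsup f(x_n) \le f_*$ gives $f(x_*) \le f_*$, so $x_*$ is optimal. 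The dual statement is analogous using usc of $\Theta$.

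I do not foresee any real obstacle here: the entire work of the theorem is absorbed into the already-proved inequalities of Theorem~\ref{thrm:UnivConvThrmInexact}, and the only subtlety worth spelling out is the bookkeeping that shows $\overline{\vartheta}_* = f_*$ holds unconditionally once $\beta$ is lsc at $0$, so that the $\varphi$-convergence to zero condition is not needed as a separate hypothesis in this asymptotically exact version.
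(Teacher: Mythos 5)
Your proposal is correct and follows essentially the same route as the paper: the paper's proof simply sets $\varepsilon_* = 0$ in Theorem~\ref{thrm:UnivConvThrmInexact} and inserts the chain $\limsup_{n \to \infty} \Theta(\mu_n) \le \Theta_* \le f_*$ to squeeze out strong duality, which is exactly your argument with the bookkeeping spelled out. Your explicit observation that lsc of $\beta$ at the origin makes $\overline{\vartheta}_* = f_*$ unconditionally (so the $\varphi$-convergence to zero condition is not needed here) is precisely the point the paper leaves implicit.
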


\begin{proof}
Note that by the weak duality and the definition of $\Theta_*$ one has
\[
  \limsup_{n \to \infty} \Theta(\mu_n) \le \Theta_* \le f_*.
\]
Bearing in mind these inequalities and putting $\varepsilon_* = 0$ in Theorem~\ref{thrm:UnivConvThrmInexact} one
obtains the required result.
\end{proof}

\begin{remark}
In many particular cases the assumption that the optimal value function $\beta$ is lsc at the origin is, in fact,
equivalent to the strong duality \cite{RubinovHuangYang2002,Dolgopolik2024}. Therefore, in these cases the previous 
theorem can be restated as follows. If the strong duality and conditions \eqref{eq:BasicAssumptions} hold, then 
equalities \eqref{eq:AsymptExactMethod_Limits} are satisfied.
\end{remark}

In the case when either $X$ or $M$ is a reflexive normed space (in particular, finite dimensional normed space) one can 
prove slightly stronger results. For the sake of shortness we formulate these results only for the asymptotically exact
primal-dual method.

\begin{corollary} \label{crlr:UnivConvThrm_RelfexiveCase}
Let $X$ be a reflexive normed space, the functions $f$ and $\dist(0, \Phi(\cdot))$ be weakly sequentially lsc on $Q$,
the set-valued map $\Phi$ take closed values on the set $Q$, the set $Q$ be weakly sequentially closed (in particular,
convex), and the assumptions of Theorem~\ref{thrm:UnivConvThrmAsympExact} be valid. Then the following statements hold 
true:
\begin{enumerate}
\item{any weakly limit point of the sequence $\{ x_n \}$ (if such point exists) is a globally optimal solution of 
the problem $(\mathcal{P})$;}

\item{if the sequence $\{ x_n \}$ is bounded and a globally optimal solution of the problem $(\mathcal{P})$ is unique,
then the sequence $\{ x_n \}$ weakly converges to this globally optimal solution.}
\end{enumerate}
\end{corollary}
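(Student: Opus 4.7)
The plan is to mirror the proof of Theorem~\ref{thrm:UnivConvThrmAsympExact} but replace strong closure/lower semicontinuity with their weak sequential counterparts, using reflexivity of $X$ only at the points where weak sequential compactness of bounded sets is required.

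For statement 1, I would start with a weakly convergent subsequence $x_{n_k} \rightharpoonup x_*$. Three feasibility-style facts need to be extracted in order. First, since $Q$ is weakly sequentially closed, $x_* \in Q$. Second, weak sequential lower semicontinuity of $\dist(0, \Phi(\cdot))$ together with $\dist(0, \Phi(x_n)) \to 0$ from the hypothesis of Theorem~\ref{thrm:UnivConvThrmAsympExact} yields $\dist(0, \Phi(x_*)) = 0$, and then the fact that $\Phi(x_*)$ is closed upgrades this to $0 \in \Phi(x_*)$, so $x_*$ is feasible for $(\mathcal{P})$. Third, since Theorem~\ref{thrm:UnivConvThrmAsympExact} gives $\lim_{n\to\infty} f(x_n) = f_*$, weak sequential lower semicontinuity of $f$ on $Q$ yields $f(x_*) \le \liminf_k f(x_{n_k}) = f_*$; combined with feasibility this forces $f(x_*) = f_*$, so $x_*$ is a globally optimal solution.

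For statement 2, the plan is the standard subsequence-of-subsequence argument. Assuming the sequence does not weakly converge to the unique global minimizer $x_*$, there is a weak neighborhood $U$ of $x_*$ and a subsequence $\{x_{n_k}\}$ entirely outside $U$. Because $\{x_n\}$ is bounded and $X$ is reflexive, the Eberlein--Shmulyan theorem extracts a further weakly convergent subsequence $x_{n_{k_j}} \rightharpoonup \widetilde{x}$. Statement 1 identifies $\widetilde{x}$ as a globally optimal solution of $(\mathcal{P})$, and uniqueness forces $\widetilde{x} = x_*$, contradicting $x_{n_{k_j}} \notin U$ for all $j$. Hence the whole sequence converges weakly to $x_*$.

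Both pieces are essentially bookkeeping once Theorem~\ref{thrm:UnivConvThrmAsympExact} is invoked; the only nontrivial input is the passage from $\dist(0,\Phi(x_n)) \to 0$ to feasibility of the weak limit, which is where the combination of weak sequential lower semicontinuity and closed-valuedness of $\Phi$ plays its role. I expect the mildly subtle point to be the contradiction argument in statement 2: one must avoid assuming metrizability of the weak topology and argue purely via weak neighborhoods plus extraction of weakly convergent subsequences from bounded sets, which is exactly what reflexivity provides through Eberlein--Shmulyan.
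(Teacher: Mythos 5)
Your proposal is correct and follows essentially the same route as the paper: statement 1 by transferring the limit-point argument of Theorem~\ref{thrm:UnivConvThrmInexact} to the weak sequential setting, and statement 2 by the same contradiction via a weak neighbourhood, a subsequence avoiding it, and extraction of a weakly convergent sub-subsequence using reflexivity and boundedness. No gaps.
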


\begin{proof}
The validity of the first statement is proved in exactly the same way as the validity of the analogous statement from
Theorem~\ref{thrm:UnivConvThrmInexact}.

Let us prove the second statement. Indeed, suppose by contradiction that the sequence $\{ x_n \}$ does not weakly
converge to unique globally optimal solution $x_*$ of the problem $(\mathcal{P})$. Then there exists a subsequence 
$\{ x_{n_k} \}$ and a neighbourhood $U(x_*)$ of $x_*$ in the weak topology such that $x_{n_k} \notin U(x_*)$ for all 
$k \in \mathbb{N}$.

By our assumption the sequence $\{ x_{n_k} \}$ is bounded. Hence taking into account the fact that the space $X$ is
reflexive one can conclude that there exists a subsequence $\{ x_{n_{k_\ell}} \}$ that weakly converges to some point
$z_*$. By the first statement of the corollary the point $z_*$ is a globally optimal solution of the problem
$(\mathcal{P})$. Thus, $z_* = x_*$, which is impossible, since $\{ x_{n_{k_\ell}} \}$ weakly converges to $z_*$, but
$x_{n_k} \notin U(x_*)$ for all $k \in \mathbb{N}$.
\end{proof}

\begin{corollary}
Let $M$ be a reflexive normed space, the function $\Theta$ weakly sequentially usc, and the assumptions of
Theorem~\ref{thrm:UnivConvThrmAsympExact} be valid. Then the following statements hold true:
\begin{enumerate}
\item{any weakly limit point of the sequence $\{ \mu_n \}$ (if such point exists) is a globally optimal solution
of the dual problem $(\mathcal{D})$;}

\item{if the sequence $\{ \mu_n \}$ is bounded and a globally optimal solution of the dual problem is unique,
then the sequence $\{ \mu_n \}$ weakly converges to this globally optimal solution.}
\end{enumerate}
\end{corollary}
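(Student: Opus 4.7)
The plan is to mirror the proof of Corollary~\ref{crlr:UnivConvThrm_RelfexiveCase} almost verbatim, swapping the roles of the primal and dual spaces and replacing weak lower semicontinuity of $f$ with weak sequential upper semicontinuity of $\Theta$. The key input is that, under the assumptions of Theorem~\ref{thrm:UnivConvThrmAsympExact}, one already has $\Theta(\mu_n) \to \Theta_* = f_*$ from \eqref{eq:AsymptExactMethod_Limits}, so the only remaining work is to pass this limit through the weak topology on $M$.

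For the first statement, I would take a weak limit point $\mu_*$ of $\{\mu_n\}$, i.e.\ a subsequence $\{\mu_{n_k}\}$ with $\mu_{n_k} \rightharpoonup \mu_*$. By weak sequential upper semicontinuity of $\Theta$,
\[
  \Theta(\mu_*) \ge \limsup_{k \to \infty} \Theta(\mu_{n_k}) = \lim_{n \to \infty} \Theta(\mu_n) = \Theta_*,
\]
where the middle equality uses Theorem~\ref{thrm:UnivConvThrmAsympExact}. On the other hand, by the definition of $\Theta_*$ one has $\Theta(\mu_*) \le \Theta_*$, so $\Theta(\mu_*) = \Theta_*$ and $\mu_*$ is globally optimal for $(\mathcal{D})$.

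For the second statement, I would argue by contradiction in exactly the same style as in the proof of Corollary~\ref{crlr:UnivConvThrm_RelfexiveCase}. Suppose the unique dual optimum $\mu_*$ is not the weak limit of $\{\mu_n\}$; then there exist a weak neighbourhood $U(\mu_*)$ and a subsequence $\{\mu_{n_k}\}$ with $\mu_{n_k} \notin U(\mu_*)$ for all $k$. Since $\{\mu_{n_k}\}$ is bounded and $M$ is reflexive, a further subsequence $\{\mu_{n_{k_\ell}}\}$ weakly converges to some $\nu_* \in M$. By the first statement, $\nu_*$ is a globally optimal solution of $(\mathcal{D})$, hence $\nu_* = \mu_*$ by uniqueness, contradicting $\mu_{n_{k_\ell}} \notin U(\mu_*)$.

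There is no real obstacle here; the only subtlety worth flagging is the direction of semicontinuity: for upper semicontinuous $\Theta$ the correct inequality on weakly convergent sequences is $\Theta(\mu_*) \ge \limsup_k \Theta(\mu_{n_k})$, which is exactly what is needed to push the already-established limit $\Theta(\mu_n) \to \Theta_*$ up to optimality of $\mu_*$. Boundedness plus reflexivity supplies the weakly convergent subsequence in part 2 via the Eberlein--\v{S}mulian theorem, so no separate sequential compactness argument is required.
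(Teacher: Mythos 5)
Your proof is correct and is exactly the argument the paper intends: the paper states this corollary without proof, leaving it as the direct analogue of Corollary~\ref{crlr:UnivConvThrm_RelfexiveCase} with $f$, $X$, and weak lower semicontinuity replaced by $\Theta$, $M$, and weak sequential upper semicontinuity, which is precisely what you carry out. Both the semicontinuity direction and the use of reflexivity plus boundedness to extract a weakly convergent subsequence are handled correctly.
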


\begin{remark}
All results of this section can be slightly generalized in the following way. Namely, one can easily verify that they 
remain to hold true, if instead of the weak duality one simply assumes that 
$\limsup_{n \to \infty} \Theta(\mu_n) \le f_*$.
\end{remark}

\section{Applications}
\label{sect:Applications}

The goal of this section is to show that many existing convergence theorems for various penalty and augmented Lagrangian
methods can be easily proved with the use of the universal convergence theorem for the inexact primal-dual method
(Theorem~\ref{thrm:UnivConvThrmInexact}) in a unified manner.

\subsection{Primal-dual penalty method}
\label{subsect:PenaltyMethod}

Let us first consider a simple illustrative example of a primal-dual penalty method for the problem $(\mathcal{P})$. 
Let $\mathbb{R}_+ = [0, + \infty)$ and $\omega \colon \mathbb{R}_+ \to \mathbb{R}_+ \cup \{ + \infty \}$ be a
nondecreasing function such that $\dom \omega = [0, \tau)$ for some $\tau \in (0, + \infty]$, $\omega(0) = 0$, 
the function $\omega$ is strictly increasing and continuous on $[0, \tau)$, and $\omega(t) \to + \infty$ as 
$t \to \tau$. Introduce the penalty function for the problem $(\mathcal{P})$ as follows:
\begin{equation} \label{eq:PenaltyFunction}
  F(x, c) = f(x) + c \omega\big( \dist(0, \Phi(x)) \big) \quad x \in X, \: c \ge 0.
\end{equation}
Here $c \ge 0$ is the penalty parameter and $\omega(+ \infty) = + \infty$ by definition. In this case 
$M = [0, + \infty)$, $\mu = c$, and $\varphi(x, \mu) = \mu \omega( \dist(0, \Phi(x)))$. The dual function
\[
  \Theta(c) = \inf_{x \in Q} \Big( f(x) + c \omega\big( \dist(0, \Phi(x)) \big) \Big) \quad \forall c \ge 0.
\]
in this case is obviously concave and usc. 

If a point $x_n \in Q$ is an $\varepsilon_n$-optimal solution of the problem
\begin{equation} \label{eq:PenaltySubproblem}
  \min_x \enspace f(x) + c_n \omega\big( \dist(0, \Phi(x)) \big) \quad \text{subject to} \quad x \in Q
\end{equation}
for some $c_n > 0$ and $\varepsilon_n > 0$, then by the definitions of $\Theta$ and $x_n$ for any $c \ge 0$ one has
\begin{align*}
  \Theta(c) - \Theta(c_n) &\le f(x_n) + c \omega\big( \dist(0, \Phi(x_n)) \big)
  - f(x_n) - c_n \omega\big( \dist(0, \Phi(x_n)) \big) + \varepsilon_n
  \\
  &= \omega\big( \dist(0, \Phi(x_n)) \big) \big( c - c_n \big) + \varepsilon_n.
\end{align*}
Thus, $p = \omega(\dist(0, \Phi(x_n)))$ is an $\varepsilon_n$-supergradient of the concave function $\Theta$, and one
can use $p$ in order to define a subgradient-type step as an update of the penalty parameter:
\[
  c_{n + 1} = c_n + s_n \big[ \omega(\dist(0, \Phi(x_n))) + \delta_n \big], \quad s_n > 0, \quad \delta_n \ge 0.
\] 
Here $s_n$ is the step length, while $\delta_n \ge 0$ is a correction. Based on this penalty parameter update, we can
define the following primal-dual penalty method given in Algorithm~\ref{alg:PrimalDualPenaltyMethod}. Let us note that
$p = \omega(\dist(0, \Phi(x_n)))$ is necessarily finite, since the optimal value of problem
\eqref{eq:PenaltySubproblem} does not exceed $f_*$ for any $c_n \ge 0$.

\begin{algorithm}
\caption{Primal-dual penalty method}

\textbf{Initialization.} Choose an initial value of the penalty parameter $c_0 \ge 0$ and a bounded above sequence of
tolerances $\{ \varepsilon_n \} \subset [0, + \infty)$. Put $n = 0$.

\textbf{Step 1. Solution of subproblem.} Find an $\varepsilon_n$-optimal solution $x_n$ of the problem
\[
  \min_x \: f(x) + c_n \omega\big( \dist(0, \Phi(x)) \big) \quad \text{subject to} \quad x \in Q.
\]

\textbf{Step 2. Penalty parameter update.} Choose some $s_n > 0$, $\delta_n \ge 0$, and define
\[
  c_{n + 1} = c_n + s_n\big[ \omega\big( \dist(0,  \Phi(x_n)) \big) + \delta_n \big].
\]
Increment $n$ and go to \textbf{Step 1}.
\label{alg:PrimalDualPenaltyMethod}
\end{algorithm}

Let us show how one can easily prove convergence of the primal-dual penalty method with the use of the universal
convergence theorem. For the sake of shortness we will only consider the general non-reflexive case and will not show
how Corollary~\ref{crlr:UnivConvThrm_RelfexiveCase} can be reformulated for the primal-dual penalty method.

As above, $\varepsilon_* = \limsup_{n \to \infty} \varepsilon_n$. Recall also that the penalty function $F$ is called
\textit{globally exact} \cite{Demyanov2010,Zaslavski,Dolgopolik2016}, if there exists $c_n > 0$ such that problem
\eqref{eq:PenaltySubproblem} has the same globally optimal solutions as the problem $(\mathcal{P})$. The infimum of all
such $c_n$ is denoted by $c_*(F)$ and is called \textit{the least exact penalty parameter} for the penalty function $F$.

\begin{theorem}
Let the penalty function $F(\cdot, c_0)$ be bounded below and the sequence $\{ (x_n, c_n) \}$ be generated by the
primal-dual penalty method. Suppose also that there exists $s_* > 0$ such that $s_n \ge s_*$ for all $n \in \mathbb{N}$.
Then $\dist(0, \Phi(x_n)) \to 0$ as $n \to \infty$ and
\begin{align*}
  \min\{ f_*, \liminf_{y \to 0} \beta(y) \} &\le \liminf_{n \to \infty} f(x_n) 
  \\
  &\le \limsup_{n \to \infty} f(x_n) \le \min\{ f_*, \liminf_{y \to 0} \beta(y) \} + \varepsilon_*.
\end{align*}
In addition, if the optimal value function $\beta$ is lsc at the origin, then
\[
  \limsup_{n \to \infty} c_n \omega\big( \dist(0, \Phi(x_n)) \big) \le \varepsilon_*
\]
and the following statements hold true:
\begin{enumerate}
\item{if the functions $f$ and $\dist(0, \Phi(\cdot))$ are lsc on $Q$ and the set-valued map $\Phi$ takes closed values
on the set $Q$, then any limit point of the sequence $\{ x_n \}$ (if such point exists) is an $\varepsilon_*$-optimal
solution of the problem $(\mathcal{P})$;}

\item{if $\varepsilon_* = 0$ and the sequence $\{ c_n \}$ is bounded, then the penalty function $F$ is globally exact
and $c_* := \lim_{n \to \infty} c_n \ge c_*(F)$.}
\end{enumerate}
\end{theorem}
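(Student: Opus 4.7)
The plan is to apply the universal convergence theorem for the inexact primal-dual method (Theorem~\ref{thrm:UnivConvThrmInexact}) with the identifications $\mu = c$ and $\varphi(x,c) = c\,\omega(\dist(0, \Phi(x)))$. Weak duality is immediate from Lemma~\ref{lem:WeakDuality}: if $x$ is feasible, then $\dist(0,\Phi(x)) = 0$ and $\omega(0) = 0$, so $\varphi(x,c) = 0$. The condition $\liminf_{n \to \infty} \varphi(x_n, c_n) \ge 0$ is trivial because $\varphi \ge 0$. Also, $c_n \ge c_0$ and $\omega \ge 0$ give $F(\cdot, c_n) \ge F(\cdot, c_0)$ on $Q$, so Assumption~\ref{as:PrimalSequenceWellDefined} follows from the boundedness below of $F(\cdot, c_0)$.

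The main obstacle is verifying that $\dist(0, \Phi(x_n)) \to 0$, which I would handle by splitting on whether $\{c_n\}$ is bounded. If $\{c_n\}$ is bounded, the telescoping identity
\[
  c_{n+1} - c_0 = \sum_{k=0}^{n} s_k\bigl[\omega(\dist(0,\Phi(x_k))) + \delta_k\bigr]
\]
combined with $s_k \ge s_* > 0$ and $\delta_k \ge 0$ forces $\sum_k \omega(\dist(0,\Phi(x_k))) < \infty$; thus $\omega(\dist(0,\Phi(x_n))) \to 0$, and strict monotonicity plus continuity of $\omega$ at zero gives $\dist(0, \Phi(x_n)) \to 0$. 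If $\{c_n\}$ is unbounded, monotonicity yields $c_n \to \infty$; combining the $\varepsilon_n$-optimality of $x_n$ tested against any feasible point $\bar{x}$ with the lower bound $f(x_n) \ge m - c_0\,\omega(\dist(0,\Phi(x_n)))$ coming from $F(\cdot, c_0) \ge m$, I would derive
\[
  (c_n - c_0)\,\omega(\dist(0,\Phi(x_n))) \le f(\bar{x}) - m + \varepsilon_n,
\]
which again forces $\omega(\dist(0,\Phi(x_n))) \to 0$ and hence $\dist(0,\Phi(x_n)) \to 0$.

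With both hypotheses of Theorem~\ref{thrm:UnivConvThrmInexact} verified, the two-sided estimate on $f(x_n)$ is immediate, and under lower semicontinuity of $\beta$ at the origin the theorem also delivers $\limsup_{n \to \infty} c_n \omega(\dist(0,\Phi(x_n))) \le \varepsilon_*$ together with statement~1.

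For statement~2, with $\varepsilon_* = 0$ and $\{c_n\}$ bounded, monotonicity gives $c_n \uparrow c_*$. Applying Theorem~\ref{thrm:UnivConvThrmAsympExact} yields $\Theta(c_n) \to f_*$; since $\Theta$ is concave, upper semicontinuous, and nondecreasing (the latter because $\omega \ge 0$), one obtains $\Theta(c_*) = f_*$ and hence $\Theta(c) = f_*$ for every $c \ge c_*$. To conclude global exactness I would fix any $c' > c_*$: any global minimizer $x'$ of $F(\cdot, c')$ on $Q$ satisfies $F(x', c') = f_* \le F(x', c_*) = f(x') + c_* \omega(\dist(0,\Phi(x')))$, and subtracting gives $(c' - c_*)\omega(\dist(0,\Phi(x'))) \le 0$, so $x'$ is feasible and therefore globally optimal for $(\mathcal{P})$. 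The converse inclusion is immediate since $F(x^*, c') = f(x^*) = f_* = \Theta(c')$ for every global minimizer $x^*$ of $(\mathcal{P})$. Hence $c' \ge c_*(F)$ for every $c' > c_*$, which gives $c_* \ge c_*(F)$.
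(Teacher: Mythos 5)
Your overall strategy matches the paper's: verify weak duality via Lemma~\ref{lem:WeakDuality}, check Assumption~\ref{as:PrimalSequenceWellDefined} from monotonicity of $\{c_n\}$, establish the decay of the infeasibility measure, and then invoke Theorems~\ref{thrm:UnivConvThrmInexact} and~\ref{thrm:UnivConvThrmAsympExact}. However, there is one concrete gap: you never verify that $\{c_n\}$ satisfies the $\varphi$-convergence to zero condition of Definition~\ref{def:PhiConvToZero}. Without it, Theorem~\ref{thrm:UnivConvThrmInexact} only yields $\limsup_{n\to\infty} f(x_n) \le \overline{\vartheta}_* + \varepsilon_*$ with $\overline{\vartheta}_* = f_*$, whereas the theorem you are proving claims the sharper upper bound $\min\{f_*, \liminf_{y\to 0}\beta(y)\} + \varepsilon_*$, which matters precisely when $\liminf_{y\to 0}\beta(y) < f_*$. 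The fix is one line (and is what the paper does): choose $t_n > 0$ with $c_n\omega(t_n)\to 0$, which is possible since $\omega(0)=0$ and $\omega$ is continuous on $[0,\tau)$; then $\varphi(z_n,c_n) = c_n\omega(\dist(0,\Phi(z_n))) \le c_n\omega(t_n)\to 0$ whenever $\dist(0,\Phi(z_n))\le t_n$. Your claim that the two-sided estimate is ``immediate'' once the two displayed hypotheses are verified is therefore not quite right as stated.

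Two points where you genuinely diverge from the paper, both acceptably. First, for $\dist(0,\Phi(x_n))\to 0$ you split on boundedness of $\{c_n\}$ and argue directly in each case, whereas the paper runs a single proof by contradiction (divergence of $\sum_n\omega(\dist(0,\Phi(x_n)))$ forces $c_n\to\infty$ and $\limsup_n F(x_n,c_n)=+\infty$, contradicting $F(x_n,c_n)\le f_*+\varepsilon_n$); the content is the same and your version is, if anything, slightly more transparent. Second, for statement~2 the paper simply deduces $\Theta(c_*)=f_*$ from Theorem~\ref{thrm:UnivConvThrmAsympExact} and cites \cite[Corollary~3.3 and Remark~8]{Dolgopolik2016} for global exactness, while you reprove that implication from scratch via the identity $F(x',c')-F(x',c_*) = (c'-c_*)\omega(\dist(0,\Phi(x')))$. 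Your self-contained argument is correct (note only that passing from $\omega(\dist(0,\Phi(x')))=0$ to feasibility of $x'$ implicitly uses closedness of the values of $\Phi$, the same hypothesis under which statement~1 operates) and buys independence from the external reference at the cost of a few extra lines.
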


\begin{proof}
Since the sequence $\{ c_n \}$ is obviously nondecreasing, the boundedness below of the function $F(\cdot, c_0)$
guarantees that Assumption~\ref{as:PrimalSequenceWellDefined} is satisfied.

As was noted above, in the case under consideration one can define $\varphi(x, \mu) = \mu \omega(\dist(0, \Phi(x))$ for
any $\mu \in M = \mathbb{R}_+$ and $x \in X$. This function is nonnegative and $\varphi(x, \mu) = 0$ for any 
$\mu \in M$ and any feasible point $x$. Hence by Lemma~\ref{lem:WeakDuality} the weak duality holds and
$\liminf_{n \to \infty} \varphi(x_n, c_n) \ge 0$. Furthermore, the sequence $\{ c_n \}$ obviously satisfies the
$\varphi$-convergence to zero condition with any sequence $\{ t_n \} \subset (0, + \infty)$ such that 
$c_n \omega(t_n) \to 0$ as $n \to \infty$ (at least one such sequence $\{ t_n \}$ exists, since $\omega(0) = 0$ and 
$\omega$ is continuous on some interval $[0, \tau)$). Therefore, in order to apply the universal convergence theorem 
for the inexact primal-dual method it is sufficient to check that $\dist(0, \Phi(x_n)) \to 0$ as $n \to \infty$. 

Suppose by contradiction that $\dist(0, \Phi(x_n))$ does not converge to zero. Then the sequence 
$\omega(\dist(0, \Phi(x_n))$ does not converge to zero either and, therefore,
\begin{equation} \label{eq:DiverPenDistSeries}
  \sum_{n = 0}^{\infty} \omega\big( \dist(0, \Phi(x_n)) \big) = + \infty.
\end{equation}
According to Step~2 of Algorithm~\ref{alg:PrimalDualPenaltyMethod} and our assumption on the sequence $\{ s_n \}$ one
has
\[
  c_n = c_0 + \sum_{i = 0}^{n - 1} \Big( s_i \omega\big( \dist(0, \Phi(x_i)) \big) + \delta_i \Big)
  \ge c_0 + s_* \sum_{i = 0}^{n - 1} \omega\big( \dist(0, \Phi(x_i)) \big)
\]
for any $n \in \mathbb{N}$. Hence taking into account \eqref{eq:DiverPenDistSeries} one can conclude that 
$c_n \to \infty$ as $n \to \infty$.

On the other hand, note that
\begin{align*}
  F(x_n, c_n) &= F(x_n, c_0) + (c_n - c_0) \omega\big( \dist(0, \Phi(x_n) \big)
  \\
  &\ge \inf_{x \in Q} F(x, c_0) + (c_n - c_0) \omega\big( \dist(0, \Phi(x_n) \big)
\end{align*}
for any $n \in \mathbb{N}$. Since the sequence $\{ c_n \}$ increases unboundedly, while the sequence 
$\{ \omega(\dist(0, \Phi(x_n)) \}$ does not converge to zero, the inequality above implies that
\begin{equation} \label{eq:PenFuncValuesUnbounded}
  \limsup_{n \to \infty} F(x_n, c_n) = + \infty.
\end{equation}
However, by the weak duality one has
\[
  F(x_n, c_n) \le \inf_{x \in Q} F(x, c_n) + \varepsilon_n \le f_* + \varepsilon_n,
\]
which contradicts \eqref{eq:PenFuncValuesUnbounded}. Thus, $\dist(0, \Phi(x_n)) \to 0$ as $n \to \infty$ and all claims
of the theorem, except for the claim about the sequence $\{ c_n \}$, hold true by
Theorem~\ref{thrm:UnivConvThrmInexact}.

If $\varepsilon_* = 0$ and the sequence $\{ c_n \}$ is bounded, then its limit $c_*$ (which exists since the sequence 
$\{ c_n \}$ is nondecreasing) is a globally optimal solution of the dual problem and the strong duality holds by
Theorem~\ref{thrm:UnivConvThrmAsympExact}. Thus, $\Theta(c_*) = f_*$, which by 
\cite[Corollary~3.3 and Remark~8]{Dolgopolik2016} means that the penalty function $F$ is globally exact and 
$c_* \ge c_*(F)$.
\end{proof}

\begin{remark}
{(i)~One can readily verify that in the case of penalty function \eqref{eq:PenaltyFunction} the lower semicontinuity of
the perturbation function $\beta$ is equivalent to the strong duality (see, e.g. \cite[Theorem~3.16]{Dolgopolik2017}).
}

{(ii)~By the theorem above the global exactness of the penalty function $F$ is a necessary condition for the sequence
of penalty parameters $\{ c_n \}$ generated by the primal-dual penalty method to be bounded (or, equivalently, to
converge). One can show that under some additional assumptions (in particular, the assumption that the sequence 
$\{ \delta_n \}$ has a subsequence that is bounded away from zero) the global exactness of the penalty function $F$ is
also sufficient for the boundedness of the sequence of penalty parameters. See 
\cite[Theorem~5 and Remark~7]{Dolgopolik2022} for more details.
}
\end{remark}

\subsection{Primal-dual rounded weighted $\ell_1$-penalty method}
\label{subsect:RoundedWeightedPenaltyMethod}

Let us now consider the primal-dual rounded weighted $\ell_1$-penalty method from \cite{BurachikKayaPrice} that can be
viewed as a primal-dual method for the $\ell_1$-penalty function with multidimensional penalty parameter (i.e.
individual penalty parameter for each constraint; see \cite{Dolgopolik2022}) combined with smoothing techniques 
\cite{Pinar,YangMengHuang,MengDangYang,LiuzziLucidi,Grossmann}.

In this section we study a primal-dual method for the following mathematical programming problem:
\begin{equation} \label{prob:MathProgram}
  \min \: f(x) \quad \text{subject to} \quad h(x) = 0, \quad g(x) \le 0, \quad x \in Q,
\end{equation}
where $h \colon X \to \mathbb{R}^m$, $h = (h_1, \ldots, h_m)$, and $g \colon X \to \mathbb{R}^{\ell}$, 
$g = (g_1, \ldots, g_{\ell})$, are given functions, and the inequality $g(x) \le 0$ is understood coordinate-wise.
This problem can be rewritten as the problem $(\mathcal{P})$, if one defines $Y = \mathbb{R}^{m + \ell}$ and
$\Phi(\cdot) := \{ h(\cdot) \} \times \{ g(\cdot) + \mathbb{R}_+^{\ell} \}$. If the space $Y$ is equipped with 
the $\ell_1$-norm, then the infeasibility measure takes the following form:
\begin{equation} \label{eq:Ell1_InfeasMeas}
  \dist(0, \Phi(x)) = \sum_{i = 1}^m |h_i(x)| + \sum_{j = 1}^{\ell} \max\{ 0, g_j(x) \}, \quad x \in X.
\end{equation}
The classic $\ell_1$-penalty function for problem \eqref{prob:MathProgram} is defined as follows:
\[
  F_{\ell_1}(x, c) = f(x) + c \Big( \sum_{i = 1}^m |h_i(x)| + \sum_{j = 1}^{\ell} \max\{ 0, g_j(x) \} \Big),
\]
where $c > 0$ is the penalty parameter.

Burachik et al. in \cite{BurachikKayaPrice} proposed to consider a rounded weighted $\ell_1$-penalty function for
problem \eqref{prob:MathProgram}, that is, they proposed to replace the nonsmooth functions $|h_i(\cdot)|$ and 
$\max\{ 0, g_j(\cdot) \}$ with their smoothing approximations (see 
\cite{Pinar,YangMengHuang,MengDangYang,LiuzziLucidi,Grossmann} for more details on smoothing approximations of 
nonsmooth penalty functions) and to use individual penalty parameter for each of the constraint $h_i(x) = 0$ and 
$g_j(x) \le 0$ instead of the single penalty parameter $c$. To define this penalty function, first for any $w > 0$ 
we introduce the following smoothing approximations of the functions $|\cdot|$ and $\max\{ 0, \cdot \}$:
\[
  \eta(t, w) = \begin{cases}
    \frac{t^2}{2w}, & \text{if } |t| < w,
    \\
    |t| - \frac{w}{2}, & \text{if } |t| \ge w,
  \end{cases}
  \qquad
  \gamma(t, w) = \begin{cases}
    \frac{t^2}{2w}, & \text{if } 0 < t < w,
    \\
    t - \frac{w}{2}, & \text{if } t \ge w,
    \\
    0, & \text{if } t \le 0.
  \end{cases}
\]
By definition we set $\eta(t, 0) = |t|$ and $\gamma(t, 0) = \max\{ 0, t \}$ for all $t \in \mathbb{R}$.

Introduce the rounded weighted $\ell_1$-penalty function for problem \eqref{prob:MathProgram} as follows:
\[
  F\big(x, (u, v, w) \big) := f(x) + \sum_{i = 1}^m u_i \eta(h_i(x), w)
  + \sum_{j = 1}^{\ell} v_j \gamma(g_j(x), w)
\]
Here $u \in \mathbb{R}^m_+$ and $v \in \mathbb{R}^{\ell}_+$ are \textit{multidimensional} penalty parameters and 
$w \ge 0$ is \textit{the smoothing parameter}. In this case one can define $M = \mathbb{R}^{m + \ell + 1}$,
\begin{equation} \label{eq:RoundedWeightedPenTerm}
  \varphi(x, \mu) = \sum_{i = 1}^m u_i \eta(h_i(x), w) + \sum_{j = 1}^{\ell} v_j \gamma(g_j(x), w),
\end{equation}
for any $x \in X$ and $\mu = (u, v, w) \in M$, and the dual function 
\[
  \Theta(\mu) = \inf_{x \in Q} \Big( f(x) + \sum_{i = 1}^m u_i \eta(h_i(x), w) 
  + \sum_{j = 1}^{\ell} v_j \gamma(g_j(x), w) \Big).
\]
The function $\Theta$ is obviously usc as the infimum of a family of continuous functions, while the function 
$(u, v) \mapsto \Theta(u, v, w)$ is concave as the infimum of a family of affine functions. Arguing in the same way as 
in the previous section one can verify that if $x_n$ is an $\varepsilon_n$-optimal solution of the problem
\begin{equation} \label{prob:RoundedWeightedPenSubprob}
  \min_x \enspace F(x, \mu_n) \quad \text{subject to} \quad x \in Q
\end{equation}
for some $\varepsilon_n \ge 0$ and $\mu_n = (u_n, v_n, w_n) \in M$, then the vector $p_n \in \mathbb{R}^{m + \ell}$,
\[
  p_n = \Big( \eta(h_1(x_n), w_n), \ldots, \eta(h_m(x_n), w_n), 
  \gamma(g_1(x_n), w_n), \ldots, \gamma(g_{\ell}(x_n), w_n) \Big)
\]
is an $\varepsilon_n$-supergradient of the concave function $(u, v) \mapsto \Theta(u, v, w_n)$ (note that each 
coordinate of the vector $p_n$ is finite, since the optimal value of problem \eqref{prob:RoundedWeightedPenSubprob}
does not exceed $f_*$). Therefore one can use the following subgradient-type step as an update of the multidimensional
penalty parameter:
\[
  (u_{n + 1}, v_{n + 1}) = (u_n, v_n) + s_n \big( p_n + \delta_n \big),  
\]
where $s_n > 0$ is the step size, while $\delta_n \ge 0$ is a correction. Utilising this penalty parameter update and
taking into account the fact that $\eta(\cdot, w) \to |\cdot|$ and $\gamma(\cdot, w) \to \max\{ 0, \cdot \}$ as 
$w \to 0$ uniformly on $\mathbb{R}$, one can define the following primal-dual rounded weighted $\ell_1$-penalty method 
given in Algorithm~\ref{alg:PD_RoundedWeightedPenaltyMethod} that was first introduced by Burachik et al. in
\cite{BurachikKayaPrice} in the case when $\varepsilon_n \equiv 0$ and the set $Q$ is compact.

\begin{algorithm}
\caption{Primal-dual rounded weighted-$\ell_1$ penalty method}

\textbf{Initialization.} Choose an initial value of the multidimensional penalty parameters $u_0 \ge 0$ and $v_0 \ge 0$,
an initial value of the smoothing parameter $w_0 > 0$, and a bounded above sequence of tolerances 
$\{ \varepsilon_n \} \subset [0, + \infty)$. Put $n = 0$.

\textbf{Step 1. Solution of subproblem.} Find an $\varepsilon_n$-optimal solution $x_n$ of the problem
\[
  \min_x \: f(x) + \sum_{i = 1}^m u_{ni} \eta(h_i(x), w_n)
  + \sum_{j = 1}^{\ell} v_{nj} \gamma(g_j(x), w_n) \quad \text{s.t.} \quad x \in Q,
\]
where $u_n = (u_{n1}, \ldots, u_{nm})$ and $v_n = (v_{n1}, \ldots, v_{n \ell})$.

\textbf{Step 2. Update of parameters.} Compute the vector
\[
  p_n = \Big( \eta(h_1(x_n), w_n), \ldots, \eta(h_m(x_n), w_n), 
  \gamma(g_1(x_n), w_n), \ldots, \gamma(g_{\ell}(x_n), w_n) \Big),
\]
choose some $s_n > 0$, $\delta_n \in \mathbb{R}^{m + \ell}_+$, and define
\[
  (u_{n + 1}, v_{n + 1}) = (u_n, v_n) + s_n \big( p_n + \delta_n \big).
\]
Choose some $w_{n + 1} \le w_n$, increment $n$, and go to \textbf{Step 1}.
\label{alg:PD_RoundedWeightedPenaltyMethod}
\end{algorithm}

Let us show how one can easily prove convergence of the primal-dual rounded weighted $\ell_1$-penalty method with 
the use of the universal convergence theorem for inexact primal-dual methods. For the sake of convenience, denote
$\mu_n = (u_n, v_n, w_n)$.

\begin{theorem} \label{thrm:PD_RoundedWeightedPenMethod}
Let the penalty function $F(\cdot, \mu_0)$ be bounded below and the sequence $\{ (x_n, u_n, v_n, w_n) \}$ be generated
by the primal-dual rounded weighted $\ell_1$-penalty method. Suppose also that one of the following two conditions is 
valid:
\begin{enumerate} 
\item{there exists $s_* > 0$ such that $s_n \ge s_*$ for all $n \in \mathbb{N}$;}

\item{there exists a nondecreasing function $\omega \colon \mathbb{R}_+ \to \mathbb{R}_+$ such that $\omega(t) > 0$ 
for any $t > 0$, and $s_n \ge \omega(\| p_n \|)$ for all $n \in \mathbb{N}$, where $\| \cdot \|$ is the Euclidean norm.
}
\end{enumerate}
Then $\dist(0, \Phi(x_n)) \to 0$ as $n \to \infty$ and
\begin{align*}
  \vartheta_* \le \liminf_{n \to \infty} f(x_n) &\le \limsup_{n \to \infty} f(x_n) \le \vartheta_* + \varepsilon_*,
  \\
  \vartheta_* - \varepsilon_* \le \liminf_{n \to \infty} \Theta(\mu_n) 
  &\le \limsup_{n \to \infty} \Theta(\mu_n) \le \vartheta_*,
\end{align*}
where $\vartheta_* = \min\{ f_*, \liminf_{y \to 0} \beta(y) \}$. In addition, if the optimal value function $\beta$ be 
lsc at the origin, then
\begin{equation}
  \limsup_{n \to \infty} \Big(\sum_{i = 1}^m u_{ni} \eta(h_i(x_n), w_n)
  + \sum_{j = 1}^{\ell} v_{nj} \gamma(g_j(x_n), w_n) \Big) \le \varepsilon_*
\end{equation}
and the following statements hold true:
\begin{enumerate}
\item{if the function $h$ is continuous on $Q$, and the functions $f$ and $g_j$, $j \in \{ 1, \ldots, \ell \}$, are lsc 
on $Q$, then any limit point of the sequence $\{ x_n \}$ (if such point exists) is an $\varepsilon_*$-optimal solution 
of problem \eqref{prob:MathProgram};
}

\item{if the sequence $\{ (u_n, v_n) \}$ is bounded, then the sequence $\{ (u_n, v_n, w_n) \}$ converges and its limit
point is an $\varepsilon_*$-optimal solutions of the dual problem;
}

\item{in the case $\varepsilon_* = 0$ for the boundedness of the sequence $\{ (u_n, v_n) \}$ it is necessary that
the $\ell_1$-penalty function for problem \eqref{prob:MathProgram} is globally exact.
}
\end{enumerate}
\end{theorem}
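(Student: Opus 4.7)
The plan is to verify the hypotheses of Theorem~\ref{thrm:UnivConvThrmInexact} (together with Corollary~\ref{crlr:InexactPDMLimits_Improved}) for the sequence generated by Algorithm~\ref{alg:PD_RoundedWeightedPenaltyMethod}, and then to extract the three enumerated conclusions as special cases. The map $\varphi$ defined in \eqref{eq:RoundedWeightedPenTerm} is nonnegative and vanishes on the feasible set of problem~\eqref{prob:MathProgram}, since $\eta(0, w) = 0$ and $\gamma(t, w) = 0$ for $t \le 0$; hence Lemma~\ref{lem:WeakDuality} yields weak duality, and $\liminf_n \varphi(x_n, \mu_n) \ge 0$ is immediate. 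The updates keep $u_{n,i} \ge u_{0,i} \ge 0$, $v_{n,j} \ge v_{0,j} \ge 0$ and $w_n \le w_0$, and $\eta(t, \cdot)$, $\gamma(t, \cdot)$ are nonincreasing in $w$, so the pointwise inequality $F(x, \mu_n) \ge F(x, \mu_0)$ holds and Assumption~\ref{as:PrimalSequenceWellDefined} is inherited from the boundedness below of $F(\cdot, \mu_0)$. Finally, the elementary estimates $\eta(t, w) \le |t|$ and $\gamma(t, w) \le \max\{0, t\}$ combined with \eqref{eq:Ell1_InfeasMeas} give $\varphi(z, \mu_n) \le \|(u_n, v_n)\|_\infty \dist(0, \Phi(z))$, so choosing $t_n = 1/((n + 1)(1 + \|(u_n, v_n)\|_\infty))$ verifies the $\varphi$-convergence to zero condition of Definition~\ref{def:PhiConvToZero}.

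The main obstacle, and the only genuinely nontrivial step, is to show $\dist(0, \Phi(x_n)) \to 0$. I would argue by contradiction: if not, extract a subsequence with $\dist(0, \Phi(x_{n_k})) \ge \alpha > 0$, and by pigeonhole some fixed index $i^*$ satisfies $|h_{i^*}(x_{n_k})| \ge \alpha/(m + \ell)$ along a further subsequence (the $g$-case being analogous). Monotonicity of $\eta$ in $w$ together with monotonicity of $\eta(\cdot, w_0)$ in $|t|$ gives $p_{n_k, i^*} = \eta(h_{i^*}(x_{n_k}), w_{n_k}) \ge \eta(\alpha/(m + \ell), w_0) =: \kappa > 0$. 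Under condition~1 we have $s_{n_k} \ge s_*$ directly, and under condition~2 the bound $\|p_{n_k}\| \ge p_{n_k, i^*} \ge \kappa$ with $\omega$ nondecreasing gives $s_{n_k} \ge \omega(\kappa) > 0$; in either case the increment $u_{n_k + 1, i^*} - u_{n_k, i^*} \ge s_{n_k} p_{n_k, i^*}$ is bounded below by a positive constant along the subsequence, and monotonicity of $\{u_{n, i^*}\}$ forces $u_{n, i^*} \to \infty$. Writing $F(x, \mu_n) - F(x, \mu_0)$ as the sum $\sum_i u_{n,i}[\eta(h_i(x), w_n) - \eta(h_i(x), w_0)] + \sum_i (u_{n,i} - u_{0,i}) \eta(h_i(x), w_0)$ plus the analogous $v, g$ contributions (all four sums being nonnegative) and keeping only the $i^*$-th term of the second sum yields $F(x_n, \mu_n) \ge F_0 + (u_{n, i^*} - u_{0, i^*}) \eta(h_{i^*}(x_n), w_0)$ with $F_0 = \inf_Q F(\cdot, \mu_0)$. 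Since $\eta(h_{i^*}(x_{n_k}), w_0) \ge \kappa$ along the subsequence, $F(x_{n_k}, \mu_{n_k}) \to \infty$, contradicting the weak-duality bound $F(x_n, \mu_n) \le \Theta(\mu_n) + \varepsilon_n \le f_* + \sup_n \varepsilon_n$.

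With $\dist(0, \Phi(x_n)) \to 0$ in hand, Theorem~\ref{thrm:UnivConvThrmInexact} delivers the stated limit estimates (with $\overline{\vartheta}_* = \underline{\vartheta}_*$ thanks to the $\varphi$-convergence to zero condition), and its full conclusion under lower semicontinuity of $\beta$ yields both the $\limsup$ bound on $\varphi(x_n, \mu_n)$ displayed in the theorem and parts~1 and~2 — the usc of $\Theta$ follows from joint continuity of $F(x, \cdot)$ in $\mu$, and convergence of the whole sequence $\{(u_n, v_n, w_n)\}$ whenever $\{(u_n, v_n)\}$ is bounded follows from the coordinatewise monotonicities. For part~3, the assumptions $\varepsilon_* = 0$ and $\{(u_n, v_n)\}$ bounded put us in the setting of Theorem~\ref{thrm:UnivConvThrmAsympExact}, which via usc of $\Theta$ gives $\Theta(u^*, v^*, w_*) = f_*$, where $(u^*, v^*, w_*) = \lim_n \mu_n$. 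For any $c \ge \max\{\max_i u^*_i, \max_j v^*_j\}$ the inequalities $|t| \ge \eta(t, w_*)$ and $\max\{0, t\} \ge \gamma(t, w_*)$ give $F_{\ell_1}(x, c) \ge f(x) + \sum_i u^*_i \eta(h_i(x), w_*) + \sum_j v^*_j \gamma(g_j(x), w_*) \ge f_*$ for every $x \in Q$, while substituting a feasible optimum gives the reverse inequality; hence $\inf_Q F_{\ell_1}(\cdot, c) = f_*$, which by \cite[Corollary~3.3 and Remark~8]{Dolgopolik2016} is precisely the global exactness of the classical $\ell_1$-penalty function.
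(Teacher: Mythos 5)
Your proposal is correct and follows essentially the same route as the paper: weak duality and Assumption~\ref{as:PrimalSequenceWellDefined} via the monotonicity of the parameters and of $\eta,\gamma$ in $w$, the $\varphi$-convergence to zero condition, a contradiction argument showing that a non-vanishing infeasibility component forces some $u_{n,i^*}$ (or $v_{n,j^*}$) to diverge and hence $F(x_n,\mu_n)\to+\infty$ against the weak-duality bound, and finally the comparison $F_{\ell_1}(\cdot,c_*)\ge F(\cdot,\mu_*)\ge\Theta(\mu_*)=f_*$ together with \cite[Corollary~3.3 and Remark~8]{Dolgopolik2016} for part~3. The only differences are cosmetic (a pigeonhole/subsequence formulation instead of the paper's divergent-series phrasing, and an explicit choice of $t_n$).
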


\begin{proof}
Note that $u_n \le u_{n + 1}$ and $v_n \le v_{n + 1}$ for any $n \in \mathbb{N}$, since the functions $\eta$ and
$\gamma$ are nonnegative. Furthermore, the functions $\eta(t, w)$ and $\gamma(t, w)$ are nonincreasing in $w$ 
(see \cite{BurachikKayaPrice}). Hence taking into account the way in which the multidimensional penalty parameters
$u_n$ and $v_n$ and the smoothing parameter $w_n$ are updated by the method, one can conclude that
$F(\cdot, \mu_n) \le F(\cdot, \mu_{n + 1})$ for any $n \in \mathbb{N}$. Therefore, the boundedness below of the function
$F(\cdot, \mu_0)$ guarantees that Assumption~\ref{as:PrimalSequenceWellDefined} holds true.

The function $\varphi$ defined in \eqref{eq:RoundedWeightedPenTerm} is obviously nonnegative. Moreover, for any 
$\mu \in M$ and any feasible point $x$ one has $\varphi(x, \mu) = 0$ . Consequently, by Lemma~\ref{lem:WeakDuality} 
the weak duality holds and $\liminf_{n \to \infty} \varphi(x_n, \mu_n) \ge 0$. Moreover, the sequence $\{ \mu_n \}$
clearly satisfies the $\varphi$-convergence to zero condition with any sequence $\{ t_n \} \subset (0, + \infty)$
such that 
\[
  \lim_{n \to \infty} \Big( \sum_{i = 1}^m u_{ni} \eta(t_n, w_n) 
  + \sum_{j = 1}^{\ell} v_{nj} \gamma(t_n, w_n) \Big) = 0
\]
(at least one such sequence $\{ t_n \}$ obviously exists). Thus, in order to apply the universal convergence theorem for
the inexact primal-dual method it remains to check that $\dist(0, \Phi(x_n)) \to 0$ as $n \to \infty$.

Suppose by contradiction that the sequence $\{ \dist(0, \Phi(x_n)) \}$ does not converge to zero. Then taking into
account \eqref{eq:Ell1_InfeasMeas} one can conclude that either the sequence $\{ |h_r(x_n)| \}$ does not converge to
zero for some $r \in \{ 1, \ldots, m \}$ or the sequence $\{ \max\{ 0, g_r(x_n) \} \}$ does not converge to zero for
some $r \in \{ 1, \ldots, \ell \}$. We will consider only the first case, since the proof in the second case almost
literally repeats the proof in the first one.

Thus, we assume that the sequence $\{ |h_r(x_n)| \}$ does not converge to zero for some $r \in \{ 1, \ldots, m \}$. 
Then the sequence $\{ \eta(h_r(x_n), w_0) \}$ does not converge to zero either and
\begin{equation} \label{eq:RoundedAbsInfMeas_DivergSeries}
  \sum_{n = 0}^{\infty} \eta(h_r(x_n), w_0) = + \infty.
\end{equation}
According to Step~2 of Algorithm~\ref{alg:PD_RoundedWeightedPenaltyMethod} one has
\[
  u_{nr} = u_{0r} + \sum_{k = 0}^{n - 1} s_k \big( \eta(h_r(x_k), w_k) + \delta_{kr} \big)
  \ge u_{0r} + \sum_{k = 0}^{n - 1} s_k \eta(h_r(x_k), w_0)
\]
(here we used the facts that $\delta_n \ge 0$, $w_{n + 1} \le w_n$ for all $n$, and the function $\eta(t, w)$ is
nonincreasing in $w$). If $s_n \ge s_* > 0$ for all $n \in \mathbb{N}$, then $u_{nr} \to + \infty$ as $n \to + \infty$
due to \eqref{eq:RoundedAbsInfMeas_DivergSeries}. In turn, if $s_n \ge \omega(\| p_n \|)$ for all $n \in \mathbb{N}$,
then $s_n \ge \omega(\eta(h_r(x_n), w_0))$ due to the fact that the function $\omega$ is nondecreasing. Clearly, the
sequence $\{  \omega(\eta(h_r(x_n), w_0)) \eta(h_r(x_k), w_0) \}$ does not converge to zero, since the sequence 
$\{ \eta(h_r(x_k), w_0) \}$ does not converge to zero, the function $\omega$ is nondecreasing, and $\omega(t) > 0$ for
any $t > 0$. Therefore,
\begin{align*}
  \lim_{n \to \infty} u_{nr} 
  &\ge u_{0r} + \lim_{n \to \infty} \sum_{k = 0}^{n - 1}  \omega(\eta(h_r(x_n), w_0)) \eta(h_r(x_k), w_0)
  \\
  &= u_{0r} + \sum_{k = 0}^{\infty} \omega(\eta(h_r(x_n), w_0)) \eta(h_r(x_k), w_0) = + \infty. 
\end{align*}
Thus, in both cases $u_{nr} \to + \infty$ as $n \to \infty$.

Bearing in mind the facts that $u_n \ge u_0$, $v_n \ge v_0$ and $w_n \le w_0$, and the functions $\eta(t, w)$ and
$\gamma(t, w)$ are nonincreasing in $w$ one obtains:
\begin{align*}
  F(x_n, \mu_n) &= F(x_n, \mu_0) 
  \begin{aligned}[t]
    &+ \sum_{i = 1}^m \big( u_{ni} \eta(h_i(x_n), w_n) - u_{0i} \eta(h_i(x_n), w_0) \big)
    \\
    &+ \sum_{j = 1}^{\ell} \big( v_{nj} \gamma(g_j(x_n), w_n) - v_{0j} \eta(g_j(x_n), w_0) \big)
  \end{aligned}
  \\
  &\ge \inf_{x \in Q} F(x, \mu_0) + \big( u_{nr} - u_{0r} \big) \eta(h_r(x_n), w_0).
\end{align*}
Therefore
\begin{equation} \label{eq:RoundedWeightedEll1PenF_UnboundIncr}
  \limsup_{n \to \infty} F(x_n, \mu_n) = + \infty
\end{equation}
due to the facts that $u_{nr} \to + \infty$ as $n \to \infty$ and the sequence $\{ \eta(h_r(x_n), w_0) \}$ does not
converge to zero. However, by the definition of $x_n$ and the fact that $F(x, \mu_n) = f(x)$ for any feasible $x$ one
has $F(x_n, \mu_n) \le f_* + \varepsilon_n$ for all $n \in \mathbb{N}$, which contradicts
\eqref{eq:RoundedWeightedEll1PenF_UnboundIncr}.

Thus, $\dist(0, \Phi(x_n)) \to 0$ as $n \to \infty$ and all claims of the theorem, except for the claim about the case 
$\varepsilon_* = 0$, hold true by Theorem~\ref{thrm:UnivConvThrmInexact} (one only needs to note that the convergence
of the sequence $\{ (u_n, v_n, w_n) \}$ is equivalent to the boundedness of the sequence $\{ (u_n, v_n) \}$ due to 
the facts that $0 \le w_{n + 1} \le w_n$, $u_n \le u_{n + 1}$, and $v_n \le v_{n + 1}$ for all $n \in \mathbb{N}$).

Let us finally show that in the case $\varepsilon_* = 0$ for the boundedness of the sequence $\{ (u_n, v_n) \}$ it is
necessary that the $\ell_1$-penalty function for problem \eqref{prob:MathProgram} is globally exact. Indeed, if this
sequence is bounded, then the sequence $\{ (u_n, v_n, w_n) \}$ converges to some point $\mu_* = (u_*, v_*, w_*)$. 
By Theorem~\ref{thrm:UnivConvThrmAsympExact} this point is a globally optimal solution of the dual problem and the
strong duality holds true, which implies that $\Theta(\mu_*) = f_*$.

Let $c_* = \max\{ (u_*)_1, \ldots, (u_*)_m, (v_*)_1, \ldots, (v_*)_{\ell} \}$. Then 
\[
  F_{\ell_1}(x, c_*) \ge F(x, \mu_*) \ge \Theta(\mu_*) = f_* \quad \forall x \in Q
\]
(the first inequality follows from the properties of the functions $\eta$ and $\gamma$; see \cite{BurachikKayaPrice}). 
By \cite[Corollary~3.3 and Remark~8]{Dolgopolik2016} the inequality above implies that the penalty function $F_{\ell_1}$
is globally exact. Thus, the boundedness of the sequence $\{ (u_n, v_n) \}$ implies that the $\ell_1$-penalty function
for problem \eqref{prob:MathProgram} is globally exact.
\end{proof}

Let us also show how the previous theorem can be strengthened in the case, when $Q$ is a compact set and the method is 
asymptotically exact, that was considered in \cite{BurachikKayaPrice}.

\begin{corollary}
Let the set $Q$ be compact, the function $h$ be continuous on $Q$, the functions $f$ and $g_j$, 
$j \in \{ 1, \ldots, \ell \}$, be lsc on $Q$, and the sequence $\{ (x_n, u_n, v_n, w_n) \}$ be generated by 
the primal-dual rounded weighted $\ell_1$-penalty method with $\varepsilon_n \to 0$ as $n \to \infty$. Suppose also that
one of the following two conditions is valid:
\begin{enumerate} 
\item{there exists $s_* > 0$ such that $s_n \ge s_*$ for all $n \in \mathbb{N}$;}

\item{there exists a nondecreasing function $\omega \colon \mathbb{R}_+ \to \mathbb{R}_+$ such that $\omega(t) > 0$ for
any $t > 0$, and $s_n \ge \omega(\| p_n \|)$ for all $n \in \mathbb{N}$, where $\| \cdot \|$ is the Euclidean norm.
}
\end{enumerate}
Then
\[
  \lim_{n \to \infty} \dist(0, \Phi(x_n)) = 0, \quad
  \lim_{n \to \infty} f(x_n) = \lim_{n \to \infty} \Theta(\mu_n) = f_*,
\]
the sequence $\{ x_n \}$ is bounded, and all its limit points are globally optimal solutions of the problem
$(\mathcal{P})$. If, in addition, the sequence $\{ (u_n, v_n) \}$ is bounded, then the sequence $\{ (u_n, v_n, w_n) \}$
converges to a globally optimal solution of the dual problem.
\end{corollary}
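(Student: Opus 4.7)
The plan is to reduce the corollary to the asymptotically exact universal convergence theorem (Theorem~\ref{thrm:UnivConvThrmAsympExact}) combined with the already-proved Theorem~\ref{thrm:PD_RoundedWeightedPenMethod}. Compactness of $Q$ together with the continuity/lsc hypotheses on the data will supply all the structural ingredients that were left hypothetical in those theorems.

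First I would verify the hypotheses of Theorem~\ref{thrm:PD_RoundedWeightedPenMethod}. Because $w_0 > 0$, the smoothing maps $\eta(\cdot, w_0)$ and $\gamma(\cdot, w_0)$ are continuous, so $F(\cdot, \mu_0)$ is lsc on the compact set $Q$, hence attains its minimum and is bounded below. One of the step-size conditions is assumed, so that theorem immediately yields $\dist(0, \Phi(x_n)) \to 0$, while $\varepsilon_* = 0$ is given.

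The central technical step is to show that the optimal value function $\beta$ is lower semicontinuous at the origin, which is what activates the strong-duality conclusions of Theorem~\ref{thrm:UnivConvThrmAsympExact}. Given a sequence $y_k = (y_{k,1}, y_{k,2}) \to 0$ realising $\liminf_{y \to 0} \beta(y)$, I would (after discarding indices on which $\beta(y_k) = +\infty$, which only help) choose $x_k \in Q$ with $h(x_k) = y_{k,1}$, $g(x_k) \le y_{k,2}$, and $f(x_k) \le \beta(y_k) + 1/k$. Compactness of $Q$ extracts a subsequence $x_{k_j} \to x_*$; continuity of $h$ and lower semicontinuity of each $g_j$ then force $h(x_*) = 0$ and $g(x_*) \le 0$, so $x_*$ is feasible for $(\mathcal{P})$; and lsc of $f$ gives $f_* \le f(x_*) \le \liminf_j f(x_{k_j}) \le \liminf_{y \to 0} \beta(y)$. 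Since $\beta(0) = f_*$, this is the lsc claim.

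With $\beta$ lsc at the origin, $\varepsilon_n \to 0$, and $\varphi(x_n, \mu_n) \ge 0$ automatic from the nonnegativity of $\eta$ and $\gamma$, all hypotheses of Theorem~\ref{thrm:UnivConvThrmAsympExact} hold. Noting that $\dist(0, \Phi(\cdot))$ is lsc on $Q$ by the explicit formula~\eqref{eq:Ell1_InfeasMeas}, that each $\Phi(x)$ is closed, and that $\Theta$ is usc as observed in Section~\ref{subsect:RoundedWeightedPenaltyMethod}, the theorem delivers strong duality together with $\lim f(x_n) = \lim \Theta(\mu_n) = f_*$, and identifies every limit point of $\{x_n\}$ as a global minimiser of $(\mathcal{P})$; boundedness of $\{x_n\}$ is automatic from $\{x_n\} \subset Q$. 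For the last assertion I would invoke the monotonicity observations already established in the proof of Theorem~\ref{thrm:PD_RoundedWeightedPenMethod}: the sequences $\{u_n\}$ and $\{v_n\}$ are componentwise nondecreasing and $\{w_n\}$ is nonincreasing and nonnegative, so boundedness of $\{(u_n,v_n)\}$ promotes the \emph{limit-point} conclusion of Theorem~\ref{thrm:UnivConvThrmAsympExact} into genuine monotone convergence $\mu_n \to \mu_*$, with $\mu_*$ a globally optimal dual solution. The main obstacle, such as it is, is the lsc-at-origin verification for $\beta$; everything else is bookkeeping that aligns the compact/continuous data with the universal convergence theorem.
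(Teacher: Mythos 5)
Your proposal is correct and follows essentially the same route as the paper: establish boundedness below of $F(\cdot,\mu_0)$ from compactness, prove that $\beta$ is lsc at the origin by extracting a convergent subsequence of near-minimizers and using continuity of $h$ and lower semicontinuity of $f$ and $g_j$, and then feed this into the already-proved convergence machinery (the paper simply invokes Theorem~\ref{thrm:PD_RoundedWeightedPenMethod} with $\varepsilon_*=0$, whereas you re-route part of the conclusion through Theorem~\ref{thrm:UnivConvThrmAsympExact}, which amounts to the same thing). The key step, the lsc-at-origin verification for $\beta$, matches the paper's argument in all essentials.
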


\begin{proof}
Under the assumptions of the corollary the function $F(\cdot, \mu_0)$ is obviously bounded below on $Q$. Let us check
that the optimal value function $\beta$ is lsc at the origin. Then applying 
Theorem~\ref{thrm:PD_RoundedWeightedPenMethod} with $\varepsilon_* = 0$ we arrive at the desired result.

Let $\{ y_n \} \subset Y := \mathbb{R}^{m + \ell}$ be any sequence such that $y_n \to 0$ as $n \to \infty$ and
\[
  \beta_* := \lim_{n \to \infty} \beta(y_n) = \liminf_{y \to 0} \beta(y).
\]
Note that the sequence $\{ \beta(y_n) \}$ is bounded below due to the compactness of the set $Q$. If there does not
exist $n_0 \in \mathbb{N}$ such that the sequence $\{ \beta(y_n) \}_{n \ge n_0}$ is bounded above, then 
$\beta_* = + \infty$ and the function $\beta$ is obviously lsc at the origin. Therefore, without loss of generality one
can suppose that the sequence $\{ \beta(y_n) \}$ is bounded.

By the definition of $\beta$ for any $n \in \mathbb{N}$ one can find $x_n \in Q$ such that
\begin{align*}
  \beta(y_n) \le f(x_n) + \varepsilon_n, \quad &h_i(x_n) = y_{ni}, \quad i \in \{ 1, \ldots, m \},
  \\
  &g_j(x_n) \le y_{n(m + j)}, \quad j \in \{ 1, \ldots, \ell \}.
\end{align*}
Since $Q$ is compact, one can find a subsequence $\{ x_{n_k} \}$ that converges to some $x_* \in Q$. From 
the inequalities above, the assumptions on the function $h$ and $g$, and the fact that $y_n \to 0$ as $n \to \infty$ it
follows that $h(x_*) = 0$ and $g(x_*) \le 0$, that is, the point $x_*$ is feasible for problem \eqref{prob:MathProgram}.
Moreover, due to the lower semicontinuity of $f$ one has
\[
  \liminf_{y \to 0} \beta(y) = \lim_{n \to \infty} \beta(y_n) = \lim_{n \to \infty} f(x_n) 
  \ge f(x_*) \ge f_* =: \beta(0),
\]
which means that the optimal value function $\beta$ is lsc at the origin.
\end{proof}

\begin{remark}
Let us note that \cite[Theorem~4.1]{BurachikKayaPrice} is a particular case of the corollary above. Furthermore, this 
corollary and Theorem~\ref{thrm:PD_RoundedWeightedPenMethod} significantly generalize the convergence analysis presented
in \cite{BurachikKayaPrice}, since we do not assume that the set $Q$ is compact, $\varepsilon_n \equiv 0$, and
either $s_n \equiv 1 / \| p_n \|^2$ or $s_n \equiv \| p_n \|^2$ (in the first case $s_n \ge s_* > 0$ for all 
$n \in \mathbb{N}$, while in the second case $s_n \ge \omega(\| p_n\|)$ for $\omega(t) = t^2$). Thus, with the use of
the universal convergence theorem for inexact primal-dual methods we not only managed to recover existing results for
the primal-dual rounded weighted $\ell_1$-penalty method, but also significantly generalized these results.
\end{remark}

\subsection{Augmented Lagrangian method for cone constrained optimization}
\label{subsect:AugmLagrMethod}

In this section we study the classic augmented Lagrangian method based on the Hestenes-Powell-Rockafellar augmented
Lagrangian \cite{BirginMartinez}. This method has been extended, in particular, to the case of optimization problems 
with nonlinear second order cone constraints \cite{LiuZhang2007} and nonlinear semidefinite programming problems 
\cite{LuoWuChen2012}. Our aim is to extend this method to the case of general cone constrained optimization problems 
(which to the best of the author's knowledge has never been done before) and prove its global convergence with the use
of the universal convergence theorem for inexact primal-dual methods.

Let $Y$ be a Hilbert space with inner product $\langle \cdot, \cdot \rangle$, $K \subset Y$ be a closed convex cone, 
and $G \colon X \to Y$ be a given function. In this section we consider the following cone constrained optimization 
problem:
\begin{equation} \label{prob:ConeConstrained}
  \min \: f(x) \quad \text{subject to} \quad G(x) \in K, \quad x \in Q.
\end{equation}
This problem can be rewritten as the problem $(\mathcal{P})$ with $\Phi(\cdot) := G(\cdot) - K$. Note that 
the infeasibility measure satisfies the equality $\dist(0, \Phi(x)) = \dist(G(x), K)$.

Denote by $K^* = \{ y^* \in Y \mid \langle y^*, y \rangle \le 0 \: \forall y \in K \}$ \textit{the polar cone} 
of the cone $K$. Since $Y$ is a Hilbert space, for any closed convex set $V \subseteq Y$ and any $y \in Y$ there 
exists a unique metric projection of $y$ onto $V$, which we denote by $\Pi_V(y)$. Following 
\cite[Section~11.K]{RockafellarWets} and \cite{ShapiroSun} one can define an augmented Lagrangian for problem 
\eqref{prob:ConeConstrained} as follows:
\begin{equation} \label{eq:HPRAugmLagr}
\begin{split}
  \mathscr{L}(x, \lambda, c) 
  &= f(x) + \frac{1}{2c} \Big[ \big\| \Pi_{K^*}(\lambda + c G(x)) \big\|^2 - \| \lambda \|^2 \Big]
  \\
  &= f(x) + \frac{1}{2c} \Big[ \dist\big( \lambda + c G(x), K \big)^2 - \| \lambda \|^2 \Big],
\end{split}
\end{equation}
It can be viewed an extension of the Hestenes-Powell-Rockafellar augmented Lagrangian
\cite{Rockafellar73,BirginMartinez} to the case of general cone constrained optimization problems. Here $\lambda \in Y$
is a multiplier and $c > 0$ is a penalty parameter. Note that the dual function
\[
  \Theta(\lambda, c) = \inf_{x \in Q} \mathscr{L}(x, \lambda, c)
\]
is concave in $(\lambda, c)$ and usc (see \cite[Section~11.K]{RockafellarWets} and \cite{ShapiroSun}).

Let us now describe a basic augmented Lagrangian method. Its theoretical scheme is given in 
Algorithm~\ref{alg:BasicAugmLagrMethod}. 

\begin{algorithm}
\caption{Inexact basic augmented Lagrangian method}

\textbf{Initialization.} Choose initial values of the multiplier $\lambda_0 \in K^*$ and the penalty parameter 
$c_0 >0$, and a bounded above sequence of tolerances $\{ \varepsilon_n \} \subset [0, + \infty)$. Put $n = 0$.

\textbf{Step 1. Solution of subproblem.} Find an $\varepsilon_n$-optimal solution $x_n$ of the problem
\[
  \min_x \: \mathscr{L}(x, \lambda_n, c_n) \quad \text{subject to} \quad x \in Q.
\]

\textbf{Step 2. Multiplier update.} Choose some $\lambda_{n + 1} \in K^*$.

\textbf{Step~3. Penalty parameter update.} Choose some $c_{n + 1} \ge c_n$, increment $n$, and go to
\textbf{Step 1}.
\label{alg:BasicAugmLagrMethod}
\end{algorithm}

In order to incorporate several particular cases into one method, we do not specify the way in which the multipliers 
and the penalty parameter are updated in this method. In particular, one can define
\[
  \lambda_{n + 1} = \lambda_n + \Big( \Pi_{K^*}(\lambda_n + c_n G(x_n)) - \lambda_n \Big)
\]
as in \cite[Algorithm~4.1]{BirginMartinez}, \cite{LiuZhang2007}, \cite[Algorithm~1]{LuoSunWu}, 
\cite[Algorithm~1]{LuoWuChen2012}, or one can put
\begin{equation} \label{eq:MultUpdated_Safeguarding}
  \lambda_{n + 1} = \Pi_{T \cap K^*}
  \left[ \lambda_n + \Big( \Pi_{K^*}(\lambda_n + c_n G(x_n)) - \lambda_n \Big) \right]
\end{equation}
for some convex bounded set $T \subset Y$ containing a neighbourhood of zero, as in the multiplier method with 
safeguarding (see \cite[Algorithm~2]{LuoSunWu} and \cite[Algorithm~2]{LuoWuChen2012}). 

The penalty parameter can be updated according to the following standard rule (see 
\cite{BirginMartinez,LuoSunWu,LuoWuChen2012}): 
\begin{equation} \label{eq:StandardAugmLagrPenUpdate}
  c_{n + 1} = \begin{cases}
    c_n, & \text{if } \sigma_n \le \tau \sigma_{n - 1},
    \\
    \gamma c_n, & \text{otherwise}
  \end{cases}
  \quad \forall n \in \mathbb{N},
\end{equation}
for some fixed $\tau \in (0, 1)$ and $\gamma > 1$, where 
\[
  \sigma_n := \frac{1}{c_n} \big\| \Pi_{K^*}(\lambda_n + c_n G(x_n)) - \lambda_n \big\|
\]
for any $n \in \mathbb{N}$ and $\sigma_{-1} := + \infty$. Note that if rule \eqref{eq:StandardAugmLagrPenUpdate} is 
used, then $\sigma_n \to 0$ as $n \to \infty$, if the sequence $\{ c_n \}$ is bounded.

Let us prove convergence of the inexact basic augmented Lagrangian method with the use of the universal convergence
theorem. For the sake of convenience, denote 
\begin{equation} \label{eq:HPRAugmLagr_PenTerm}
\begin{split}
  \varphi(x, (\lambda, c)) 
  &= \Big( \frac{1}{2c} \Big[ \big\| \Pi_{K^*}(\lambda + c G(x)) \big\|^2 - \| \lambda \|^2 \Big]
  \\
  &= \frac{1}{2c} \Big[ \dist\big( \lambda + c G(x), K \big)^2 - \| \lambda \|^2 \Big]
\end{split}
\end{equation}
and $\mu_n = (\lambda_n, c_n)$. 

\begin{theorem} \label{thrm:InexactBasicAugmLagrMethod}
Let $\{ (x_n, \lambda_n, c_n) \}$ be the sequence generated by the inexact basic augmented Lagrangian method. 
Suppose that the sequence $\{ f(x_n) \}$ is bounded below and the two following assumptions hold true:
\begin{enumerate}
\item{if $c_n \to + \infty$ as $n \to \infty$, then 
\begin{equation} \label{eq:MultiplierPenParamQuotient}
  \lim_{n \to \infty} \frac{\| \lambda_n \|}{\sqrt{c_n}} = 0.
\end{equation}
}

\vspace{-5mm}

\item{if the sequence $\{ c_n \}$ is bounded, then (a) the sequence $\{ \lambda_n \}$ is bounded, 
(b) the sequence $\sigma_n := (1/c_n) \| \Pi_{K^*}(\lambda_n + c_n G(x_n)) - \lambda_n \|$, $n \in \mathbb{N}$, 
converges to zero, and (c) either the sequence $\{ \Pi_K(G(x_n)) \}$ is bounded or
$Y = \mathbb{R}^{m + \ell}$ and $K = \{ 0 \} \times \mathbb{R}^{\ell}_-$, where $\mathbb{R}_- = (- \infty, 0]$.
}
\end{enumerate}
Then
\begin{gather*}
  \lim_{n \to \infty} \dist(G(x_n), K) = 0, \quad 
  \limsup_{n \to \infty} \varphi(x_n, (\lambda_n, c_n)) \le \varepsilon_*,
  \\
  \vartheta_* \le \liminf_{n \to \infty} f(x_n) \le \limsup_{n \to \infty} f(x_n) \le \vartheta_* + \varepsilon_*,
  \\
  \vartheta_* - \varepsilon_* \le \liminf_{n \to \infty} \Theta(\lambda_n, c_n) 
  \le \limsup_{n \to \infty} \Theta(\lambda_n, c_n) \le \vartheta_*,
\end{gather*}
where $\vartheta_* := \min\{ f_*, \liminf_{y \to 0} \beta(y) \}$. If, in addition, the optimal value function $\beta$
is lsc at the origin, then $\vartheta_* = f_*$ and the following statements hold true:
\begin{enumerate}
\item{if $f$ is lsc on $Q$ and $G$ is continuous on $Q$, then any limit point of the sequence $\{ x_n \}$ (if such point
exists) is an $\varepsilon_*$-optimal solution of problem \eqref{prob:ConeConstrained};
}

\item{if the sequence of penalty parameters $\{ c_n \}$ is bounded, then any limit point of the sequence
$\{ (\lambda_n, c_n) \}$ (if such point exists) is an $\varepsilon_*$-optimal solution of the dual problem.
} 
\end{enumerate}
\end{theorem}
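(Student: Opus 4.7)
The plan is to apply the universal convergence theorem for inexact primal-dual methods (Theorem~\ref{thrm:UnivConvThrmInexact}) with $\varphi$ given by \eqref{eq:HPRAugmLagr_PenTerm}. Once its three inputs---weak duality, $\dist(G(x_n), K) \to 0$, and $\liminf_n \varphi(x_n, \mu_n) \ge 0$---are in hand, together with the $\varphi$-convergence to zero condition (needed to replace $\overline{\vartheta}_*$ by $\vartheta_*$ in \eqref{eq:ObjFuncInexactLim_StrDual}--\eqref{eq:DualFuncInexactLim_StrDual}), the claimed inequalities fall out directly. Under lower semicontinuity of $\beta$ at the origin, the final part of Theorem~\ref{thrm:UnivConvThrmInexact} then yields $\varepsilon_*$-optimality of primal limit points (using that $f$ is lsc, $\Phi$ takes closed values as $K$ is closed, and $\dist(0, \Phi(\cdot)) = \dist(G(\cdot), K)$ is continuous from continuity of $G$) and of dual limit points (since $\Theta$ is usc).

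Weak duality and the $\varphi$-convergence to zero condition both rest on one elementary estimate. For any $z \in Q$, put $y := \Pi_K(G(z)) \in K$; since $K$ is a cone, $cy \in K$ for every $c > 0$, and
\[
    \dist(\lambda + cG(z), K)^2 \le \|\lambda + c(G(z) - y)\|^2 = \|\lambda\|^2 + 2c\langle \lambda, G(z) - y\rangle + c^2 \dist(G(z), K)^2,
\]
whence $\varphi(z, (\lambda, c)) \le \|\lambda\|\, \dist(G(z), K) + (c/2) \dist(G(z), K)^2$. For feasible $z$ the right-hand side vanishes, so Lemma~\ref{lem:WeakDuality} yields weak duality. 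To verify Definition~\ref{def:PhiConvToZero}, I select $t_n > 0$ such that $\|\lambda_n\| t_n \to 0$ and $c_n t_n^2 \to 0$; such $t_n$ exists under assumption (i) (using $c_n \to \infty$ together with \eqref{eq:MultiplierPenParamQuotient}, e.g.\ $t_n = 1/(n(1 + \|\lambda_n\|)(1 + \sqrt{c_n}))$) and trivially under (ii) (take $t_n = 1/n$).

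The real effort is the verification of $\dist(G(x_n), K) \to 0$ and $\liminf_n \varphi(x_n, \mu_n) \ge 0$, which I split along the dichotomy (i)--(ii). In Case (i) ($c_n \to +\infty$), weak duality and $\varepsilon_n$-optimality of $x_n$ give $f(x_n) + \varphi(x_n, \mu_n) \le f_* + \varepsilon_n$, so the lower bound on $\{f(x_n)\}$ forces $\varphi(x_n, \mu_n) \le C$ for some constant $C$, and rearranging \eqref{eq:HPRAugmLagr_PenTerm} gives $\dist(\lambda_n + c_n G(x_n), K)^2 \le 2 c_n C + \|\lambda_n\|^2$. Because $K$ is a cone, $c_n \dist(G(x_n), K) = \dist(c_n G(x_n), K) \le \dist(\lambda_n + c_n G(x_n), K) + \|\lambda_n\|$ by the triangle inequality, and dividing by $c_n$ together with \eqref{eq:MultiplierPenParamQuotient} forces $\dist(G(x_n), K) \to 0$; simultaneously $\varphi(x_n, \mu_n) \ge -\|\lambda_n\|^2/(2c_n) \to 0$. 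In Case (ii), the Moreau decomposition of $z_n := \lambda_n + c_n G(x_n)$ yields $\Pi_{K^*}(z_n) - \lambda_n = c_n G(x_n) - \Pi_K(z_n)$, so $\|G(x_n) - \Pi_K(z_n)/c_n\| = \sigma_n$ with $\Pi_K(z_n)/c_n \in K$, hence $\dist(G(x_n), K) \le \sigma_n \to 0$; expanding
\[
    \|\Pi_{K^*}(z_n)\|^2 - \|\lambda_n\|^2 = \|\Pi_{K^*}(z_n) - \lambda_n\|^2 + 2\langle \Pi_{K^*}(z_n) - \lambda_n, \lambda_n\rangle
\]
and dividing by $2c_n$ gives $|\varphi(x_n, \mu_n)| \le c_n \sigma_n^2/2 + \sigma_n \|\lambda_n\| \to 0$ by boundedness of $\{\lambda_n\}$ and $\{c_n\}$. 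Hypothesis (c) is invoked here to control $\Pi_K(G(x_n))$ in the general cone setting; in the polyhedral special case $Y = \mathbb{R}^{m + \ell}$, $K = \{0\} \times \mathbb{R}^\ell_-$ the componentwise structure of $K$ makes this control automatic.

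The principal obstacle is Case (i): the estimate $\dist(\lambda_n + c_n G(x_n), K)^2 \le 2c_n C + \|\lambda_n\|^2$ grows linearly in $c_n$, and only the sharp rate $\|\lambda_n\|/\sqrt{c_n} \to 0$ supplied by \eqref{eq:MultiplierPenParamQuotient} is tight enough to produce $\dist(G(x_n), K) \to 0$ after dividing by $c_n$. Once both cases are dispatched, Theorem~\ref{thrm:UnivConvThrmInexact} furnishes \eqref{eq:ObjFuncInexactLim_StrDual}--\eqref{eq:DualFuncInexactLim_StrDual}, lower semicontinuity of $\beta$ upgrades $\vartheta_*$ to $f_*$, and the two limit-point statements follow from the items in the second half of that theorem.
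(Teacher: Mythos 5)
Your proposal is correct and follows the paper's overall architecture (verify weak duality, the $\varphi$-convergence to zero condition, $\dist(G(x_n),K)\to 0$, and $\liminf_n\varphi(x_n,\mu_n)\ge 0$, then invoke Theorem~\ref{thrm:UnivConvThrmInexact}), but the two case analyses are carried out by genuinely different arguments. In the case $c_n\to+\infty$ the paper argues by contradiction: if $\dist(G(x_{n_k}),K)\ge\delta$ along a subsequence, the quadratic term forces $\mathscr{L}(x_{n_k},\lambda_{n_k},c_{n_k})\to+\infty$, contradicting $\mathscr{L}\le f_*+\varepsilon_{n_k}$; you instead extract the uniform bound $\varphi(x_n,\mu_n)\le C$ from weak duality and the boundedness below of $\{f(x_n)\}$ and then divide the resulting estimate $\dist(\lambda_n+c_nG(x_n),K)^2\le 2c_nC+\|\lambda_n\|^2$ by $c_n$. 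The two are equivalent in substance; yours is more direct and quantitative. The more interesting divergence is the bounded-$c_n$ case: your single identity $\Pi_{K^*}(z_n)-\lambda_n=c_nG(x_n)-\Pi_K(z_n)$ (Moreau decomposition of $z_n=\lambda_n+c_nG(x_n)$) immediately gives $\dist(G(x_n),K)\le\sigma_n\to 0$ for an arbitrary closed convex cone, whereas the paper needs the two sub-cases I.A (boundedness of $\{\Pi_K(G(x_n))\}$, via a longer projection argument) and I.B (the polyhedral cone $\{0\}\times\mathbb{R}^{\ell}_-$). Your argument is correct and in fact shows that hypothesis (c) of the theorem is redundant --- which makes your closing remark that ``hypothesis (c) is invoked here'' inaccurate with respect to your own proof: you never use it, and you do not need to. The subsequent expansion giving $|\varphi(x_n,\mu_n)|\le c_n\sigma_n^2/2+\sigma_n\|\lambda_n\|$ is also correct and mirrors the paper's estimate via $\varkappa_n$. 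One small bookkeeping point: the conclusion $\limsup_n\varphi(x_n,\mu_n)\le\varepsilon_*$ without lower semicontinuity of $\beta$ comes from Corollary~\ref{crlr:InexactPDMLimits_Improved} (which is why the $\varphi$-convergence to zero condition must be verified), not from the ``in addition'' part of Theorem~\ref{thrm:UnivConvThrmInexact}; your choice of $t_n$ works unconditionally, so the suggestion that assumptions (i)--(ii) are needed to construct $t_n$ is superfluous but harmless.
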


\begin{proof}
To apply the universal convergence theorem, define $M = Y \times (0, + \infty)$ and 
$F(x, \mu) = f(x) + \varphi(x, \mu)$ with $\mu = (\lambda, c)$. Our aim is to show that
\begin{equation} \label{eq:ConvCond_BasicAugmLagrMethod}
  \lim_{n \to \infty} \dist(0, \Phi(x_n)) = 0, \quad \liminf_{n \to \infty} \varphi(x_n, \mu_n) \ge 0,
\end{equation}
the weak duality holds true, and the sequence $\{ (\lambda_n, c_n) \}$ satisfies the $\varphi$-conver\-gen\-ce to zero
condition. Then applying the universal convergence theorem for inexact primal-dual methods one obtains the required
result.

\textbf{Part 1.} Observe that for any feasible point $x$ and any $\lambda \in Y$ and $c > 0$ one has
\[
  \dist(\lambda + c G(x), K) = \inf_{y \in K} \| \lambda + c G(x) - y \| 
  \le \| \lambda + c G(x) - c G(x) \| \le \| \lambda \|
\]
since $G(x) \in K$ and $K$ is a cone. Therefore, for any feasible point $x$ and any $\mu = (\lambda, c) \in M$ one has
\[
  \varphi(x, \mu) =  \frac{1}{2c} \Big[ \dist\big( \lambda + c G(x), K \big)^2 - \| \lambda \|^2 \Big]
  \le \frac{1}{2c} \Big[ \| \lambda \|^2 - \| \lambda \|^2 \Big] \le 0
\]
(see \eqref{eq:HPRAugmLagr_PenTerm}), which by Lemma~\ref{lem:WeakDuality} implies that the weak duality holds true. 

\textbf{Part 2.} Let us now check the $\varphi$-convergence to zero condition. Indeed, fix some 
$n \in \mathbb{N}$, $t_n > 0$, and $z_n \in Q$ such that $\dist(G(z_n), K) \le t_n$. Then 
$\dist(c_n G(z_n), K) \le c_n t_n$ and, therefore, one can find a vector $y_n \in K$ such that 
$\| c_n G(z_n) - y_n \| \le 2 c_n t_n$. Consequently, one has
\[
  \dist(\lambda_n + c_n G(z_n), K) \le \| \lambda_n + c_n G(z_n) - y_n \|
  \le \| \lambda_n \| + 2 c_n t_n,
\]
which implies that
\begin{align*}
  \dist(\lambda_n + c_n G(z_n), K)^2 - \| \lambda_n \|^2
  &\le \big( \| \lambda_n \| + 2 c_n t_n \big)^2 - \| \lambda_n \|^2
  \\
  &= 4 c_n t_n \| \lambda_n \| + 4 c_n^2 t_n^2.
\end{align*}
Therefore, choosing $t_n > 0$ in such a way that $2 t_n \| \lambda_n \| + 2 c_n t_n^2 \le 1 / (n + 1)$ one gets that for
any sequence $\{ z_n \} \subset Q$ with $\dist(G(z_n), K) \le t_n$ one has
\[
  \varphi(z_n, \mu_n) \le \frac{1}{n + 1}  \quad \forall n \in \mathbb{N}
\]
(see \eqref{eq:HPRAugmLagr_PenTerm}). Therefore, the sequence $\{ (\lambda_n, c_n) \}$ satisfied the 
$\varphi$-convergence to zero condition for any such sequence $\{ t_n \}$.

\textbf{Part 3.} Thus, it remains to check that conditions \eqref{eq:ConvCond_BasicAugmLagrMethod} hold true. Let us 
consider two cases corresponding to the two assumptions of the theorem.

\textbf{Case I.} Let the sequence of penalty parameters $\{ c_n \}$ be bounded. Suppose that $\dist(G(x_n), K) \to 0$
as $n \to \infty$. From \eqref{eq:HPRAugmLagr_PenTerm} it follows that
\[
  \varphi(x_n, \mu_n) = \frac{1}{2 c_n} \Big( \big\| \Pi_{K^*}(\lambda_n + c_n G(x_n)) \big\| - \| \lambda_n \| \Big)   
  \Big( \dist\big( \lambda_n + c_n G(x_n), K \big) + \| \lambda_n \| \Big)  
\]
The sequence $\{ \dist(\lambda_n + c_n G(x_n), K) + \| \lambda_n \| \}$ is bounded due to the boundedness of 
the sequences $\{ \lambda_n \}$ and $\{ c_n \}$ and the fact that $\dist(G(x_n), K) \to 0$ as $n \to \infty$. Hence 
taking into account the facts that
\[
  \frac{1}{2 c_n} \Big| \big\| \Pi_{K^*}(\lambda_n + c_n G(x_n)) \big\| - \| \lambda_n \| \Big|
  \le \frac{1}{2 c_n} \Big\| \Pi_{K^*}(\lambda_n + c_n G(x_n)) - \lambda_n \Big\|
  = \frac{1}{2} \sigma_n
\]
and the sequence $\{ \sigma_n \}$ converges to zero by our assumption, one can conclude that 
$\varphi(x_n, \mu_n) \to 0$ as $n \to \infty$. Therefore it remains to check that $\dist(G(x_n), K) \to 0$ as 
$n \to \infty$. Let us consider two subcases corresponding to the two different assumptions.

\textbf{Case I.A.} Suppose that the sequence $\{ \Pi_K(G(x_n)) \}$ is bounded. From the boundedness of the sequence 
$\{ c_n \}$ and the convergence of the sequence $\{ \sigma_n \}$ to zero it follows that
\begin{equation} \label{eq:KappaConvergesToZero}
  \varkappa_n := \| \Pi_{K^*}(\lambda_n + c_n G(x_n)) - \lambda_n \| \xrightarrow[n \to \infty]{} 0.
\end{equation}
By the definition of projection for any $n \in \mathbb{N}$ and $z \in K^*$ one has
\begin{align*}
  \| c_n G(x_n) \| &= \big\| (\lambda_n + c_n G(x_n)) - \lambda_n \big\|
  \\
  &\le \Big\| \lambda_n + c_n G(x_n) - \Pi_{K^*}(\lambda_n + c_n G(x_n)) \Big\| + \varkappa_n
  \\
  &\le \big\| \lambda_n + c_n G(x_n) - z \big\| + \varkappa_n.
\end{align*}
Since $K^*$ is a convex cone and $\{ \lambda_n \} \subset K^*$, for any $n \in \mathbb{N}$ and $z \in K^*$ one has
$\lambda_n + z \in K^*$. Therefore for any $n \in \mathbb{N}$ and $z \in K^*$ one has
\[
  \| c_n G(x_n) \| \le \big\| \lambda_n + c_n G(x_n) - (\lambda_n + z) \big\| + \varkappa_n
  = \big\| c_n G(x_n) - z \big\| + \varkappa_n,
\]
which yields
\[
  \| G(x_n) \| \le \dist(G(x_n), K^*) + \frac{\varkappa_n}{c_n} = \| \Pi_K(G(x_n)) \| + \frac{\varkappa_n}{c_n}
  \quad \forall n \in \mathbb{N}.
\]
Hence by applying Moreau's decomposition theorem for cones \cite{Moreau,Soltan} one obtains that
\begin{multline*}
  \| \Pi_K(G(x_n)) \|^2 + \| \Pi_{K^*}(G(x_n)) \|^2 = \| G(x_n) \|^2
  \\
  \le \| \Pi_K(G(x_n)) \|^2 + 2 \frac{\varkappa_n}{c_n} \| \Pi_K(G(x_n)) \| + \frac{\varkappa_n^2}{c_n^2}
\end{multline*}
for all $n \in \mathbb{N}$ or, equivalently,
\[
  \| \Pi_{K^*}(G(x_n)) \|^2 \le 2 \frac{\varkappa_n}{c_n} \| \Pi_K(G(x_n)) \| + \frac{\varkappa_n^2}{c_n^2},
\]
which with the use of \eqref{eq:KappaConvergesToZero} and the boundedness of the sequences $\{ \Pi_K(G(x_n)) \}$ and 
$\{ c_n \}$ implies that $\dist(G(x_n), K) = \| \Pi_{K^*}(G(x_n)) \| \to 0$ as $n \to \infty$.

\textbf{Case I.B.} Suppose that $Y = \mathbb{R}^{m + \ell}$ and $K = \{ 0 \} \times \mathbb{R}^{\ell}_-$. Then 
$K^* = \mathbb{R}^m \times \mathbb{R}^{\ell}_+$ and for any $n \in \mathbb{N}$ one has
\begin{multline*}
  \sigma_n = \frac{1}{c_n} \Big\| \Pi_{K^*}(\lambda_n + c_n G(x_n)) - \lambda_n \Big\| =
  \\
  \bigg\| \Big( G_1(x_n), \ldots, G_m(x_n), \max\left\{ G_{m + 1}(x_n), - \frac{1}{c_n} (\lambda_n)_{m + 1} \right\}, 
  \ldots,
  \\
  \max\left\{ G_{m + \ell}(x_n), - \frac{1}{c_n} (\lambda_n)_{m + \ell} \right\} \bigg\|.
\end{multline*}
Combining the equality above with the facts that $\sigma_n \to 0$ as $n \to \infty$ and $\lambda_n \in K^*$ one can
readily verify that $\dist(G(x_n), K) \to 0$ as $n \to \infty$.

\textbf{Case II.} Suppose that $c_n \to + \infty$ as $n \to \infty$ and equality \eqref{eq:MultiplierPenParamQuotient} 
holds true. Recall that
\[
  \varphi\big( x, (\lambda, c) \big) 
  = \inf_{y \in K - G(x)} \left( - \langle \lambda, y \rangle + \frac{c}{2} \| y \|^2 \right)
\]
(see \cite[Section~11.K]{RockafellarWets} and \cite{ShapiroSun}). For any $y \in Y$ one has
\[
  - \langle \lambda, y \rangle + \frac{c}{2} \| y \|^2 \ge - \| \lambda \| \| y \| + \frac{c}{2} \| y \|^2
  \ge \inf_{t \ge 0} \left( - \| \lambda \| t + \frac{c}{2} t^2 \right) = - \frac{\| \lambda \|^2}{2 c}.
\]
Therefore 
\[
  \varphi(x_n, \mu_n) \ge - \frac{\| \lambda_n \|^2}{2 c_n} \quad \forall n \in \mathbb{N}
\]
and $\liminf_{n \to \infty} \varphi(x_n, \mu_n) \ge 0$ due to equality \eqref{eq:MultiplierPenParamQuotient}.

Let us now show that $\dist(G(x_n), K) \to 0$ as $n \to \infty$. Suppose by contradiction that this statement is false.
Then there exist $\delta > 0$ and a subsequence $\{ x_{n_k} \}$ such that $\dist(G(x_{n_k}), K) \ge \delta$ 
for all $k \in \mathbb{N}$. Taking into account \eqref{eq:MultiplierPenParamQuotient} one can suppose that
\[
  \dist\left(\frac{\lambda_{n_k}}{c_{n_k}} + G(x_{n_k}), K \right) \ge \frac{\delta}{2} 
  \quad \forall k \in \mathbb{N}.
\]
Hence bearing in mind \eqref{eq:HPRAugmLagr}, for any $k \in \mathbb{N}$ one gets that
\begin{align*}
  \mathscr{L}(x_{n_k}, \lambda_{n_k}, c_{n_k}) &= f(x_{n_k}) 
  + \frac{1}{2c_{n_k}} \Big[ \dist\big( \lambda_{n_k} + c_{n_k} G(x_{n_k}), K \big)^2 - \| \lambda_{n_k} \|^2 \Big]
  \\
  &= f(x_{n_k}) + \frac{c_{n_k}}{2} \dist\Big(\frac{\lambda_{n_k}}{c_{n_k}} + G(x_{n_k}), K \Big)^2
  - \frac{\| \lambda_{n_k} \|^2}{2 c_{n_k}}
  \\
  &\ge f(x_{n_k}) + \frac{c_{n_k}}{4} \delta - \frac{\| \lambda_{n_k} \|^2}{2 c_{n_k}}.
\end{align*}
Consequently, $\mathscr{L}(x_{n_k}, \lambda_{n_k}, c_{n_k}) \to + \infty$ as $k \to \infty$ due to equality 
\eqref{eq:MultiplierPenParamQuotient} and the boundedness below of the sequence $\{ f(x_{n_k}) \}$, which contradicts
the fact that $\mathscr{L}(x_{n_k}, \lambda_{n_k}, c_{n_k}) \le f_* + \varepsilon_{n_k}$ for all $k \in \mathbb{N}$ due 
to the weak duality. Therefore, $\dist(G(x_n), K) \to 0$ as $n \to \infty$.

Thus, in both cases conditions \eqref{eq:ConvCond_BasicAugmLagrMethod} are satisfied and applying 
Theorem~\ref{thrm:UnivConvThrmInexact} we arrive at the required results.
\end{proof}

\begin{remark}
Let us briefly discuss the assumptions of Theorem~\ref{thrm:InexactBasicAugmLagrMethod}:
\begin{enumerate}
\item{In the case of inequality constrained problems, condition~\eqref{eq:MultiplierPenParamQuotient} always holds true,
if one uses the so-called \textit{conditional multiplier updating} (see e.g. \cite{LuoSunLi,LuoSunWu}). An extension of
the conditional multplier updating to the case of general cone constrained problems is possible and is an interesting
topic for future research.
}

\item{The assumption on the boundedness of the sequence of multipliers $\{ \lambda_n \}$ is common in the literature on
augmented Lagrangian methods \cite{Bertsekas,BirginMartinez,LuoSunLi,LuoSunWu,LuoWuChen2012}. One can ensure the
boundedness of this sequence by using the multiplier method with safeguarding \eqref{eq:MultUpdated_Safeguarding}.
}

\item{The sequence $\{ \sigma_n \}$ convergences to zero whenever the sequence of penalty parameters $\{ c_n \}$ is
bounded, if the standard rule \eqref{eq:StandardAugmLagrPenUpdate} for updating the penalty parameter is used.
}

\item{The assumption on the boundedness of the sequence $\{ \Pi_K(G(x_n)) \}$ is satisfied, provided the sequence 
$\{ x_n \}$ is bounded and $G$ maps bounded sets into bounded sets. In particular, if $X$ is finite dimensional, it is 
sufficient to suppose that the sequence $\{ x_n \}$ is bounded and the function $G$ is continuous.}

\item{Note that problem \eqref{prob:ConeConstrained} with $Y = \mathbb{R}^{m + \ell}$ and 
$K = \{ 0 \} \times \mathbb{R}^{\ell}_-$ is just a standard mathematical programming problems with $m$ equality and
$\ell$ inequality constraints.}
\end{enumerate}
\end{remark}

\begin{remark}
Theorem~\ref{thrm:InexactBasicAugmLagrMethod} unifies, significantly generalizes, and strengthens many existing
results on converges of primal-dual augmented Lagrangian methods based on the Hestenes-Powell-Rockafellar augmented
Lagrangian. In particular, \cite[Theorem~5.2]{BirginMartinez}, \cite[Theorem~2.4 and 3.1]{LuoSunWu} in the case when
$\sigma(z) \equiv 0.5 z^2$, and \cite[Theorems~1, 3, and 4]{LuoWuChen2012} follow directly from
Theorem~\ref{thrm:InexactBasicAugmLagrMethod}. Moreover, Theorem~\ref{thrm:InexactBasicAugmLagrMethod} strengthens 
the corresponding theorems from \cite{LuoSunWu,LuoWuChen2012}, since we do not impose the assumption that the objective
function $f$ is bounded below on $Q$ and only suppose that the sequence $\{ f(x_n) \}$ is bounded below.
\end{remark}

\subsection{Multiplier algorithm based on PALM}
\label{subsect:PALMAlgorithm}

Let us finally apply the universal convergence theorem for inexact primal-dual methods to the multiplier algorithm
based on the so-called P-type Augmented Lagrangian Method (PALM) that was proposed in \cite{WangLi2009}. One can prove 
convergence of all other augmented Lagrangian methods from \cite{WangLi2009} with the use of the universal convergence
theorem in exactly the same way.

In this section we study an augmented Lagrangian method for the inequality constrained problem:
\begin{equation} \label{prob:InequalConstr}
  \min \: f(x) \quad \text{subject to} \quad g(x) \le 0, \quad x \in Q,
\end{equation}
where $g \colon X \to \mathbb{R}^{\ell}$ is a given function. Fix some $a \in (0, + \infty]$, and let a function
$P \colon (- \infty, a) \times \mathbb{R}_+ \to \mathbb{R}$, $P = P(s, t)$, be continuous in $(s, t)$ and continuously 
differentiable in $s$. Suppose also that for some continuous function $r \colon \mathbb{R}_+ \to \mathbb{R}$ one has
\begin{equation} \label{PFuncCond}
  P(s, t) \ge r(t) \quad \forall s \in (- \infty, a), \: t \in \mathbb{R}_+.
\end{equation}
Following \cite{WangLi2009}, we call the function
\[
  L_P(x, \lambda, c) = \begin{cases}
    f(x) + \frac{1}{c} \sum_{i = 1}^{\ell} P(c g_i(x), \lambda_i), & \text{if }
    c g_i(x) < a \enspace \forall i \in \{ 1, \ldots, \ell \},
    \\
    + \infty, & \text{otherwise}
  \end{cases}
\]
\textit{the P-type augmented Lagrangian} for problem \eqref{prob:InequalConstr}. Here 
$\lambda = (\lambda_1, \ldots, \lambda_{\ell}) \in \mathbb{R}^{\ell}_+$ are multipliers and $c > 0$ is the penalty 
parameter. Several particular examples of the function $P$ can be found in \cite[Section~2]{WangLi2009}. Here we only
note that the well-known Hestenes-Powell-Rockafellar augmented Lagrangian \cite{Rockafellar73,BirginMartinez} is a
particular case of the P-type augmented Lagrangian.

In \cite[Section~4]{WangLi2009} the following primal-dual method, given in Algorithm~\ref{alg:PALM} and called 
\textit{the multiplier algorithm based on PALM}, was proposed.

\begin{algorithm}
\caption{Multiplier algorithm based on PALM}

\textbf{Initialization.} Choose initial points $x_0 \in Q$, $\lambda_0 \ge 0$, and $c_0 > 0$, and a bounded above
sequence of tolerances $\{ \varepsilon_n \} \subset [0, + \infty)$. Put $n = 0$.

\textbf{Step 1. Update of parameters.} Compute
\begin{align*} 
  \lambda_{(n + 1) i} &= P'_s(c_n g_i(x_n), \lambda_{ni}), \quad i \in \{ 1, \ldots, \ell \},
  \\
  c_{n + 1} &\ge (n + 1) \max\Big\{ 1, \sum_{i = 1}^{\ell} |r(\lambda_{(n + 1) i})| \Big\},
\end{align*}
where $P'_s$ is the derivative of the function $s \mapsto P(s, t)$.

\textbf{Step 2. Solution of subproblem.} Find an $\varepsilon_{n + 1}$-optimal solution $x_{n + 1}$ of the problem
\[
  \min_x \: L_P(x, \lambda_n, c_n) \quad \text{subject to} \quad x \in Q.
\]
Increment $n$ and go to \textbf{Step 1}.
\label{alg:PALM}
\end{algorithm}

Let us analyse convergence of Algorithm~\ref{alg:PALM} with the use of the universal convergence theorem for
inexact primal-dual methods. 

\begin{theorem}
Let $\{ (x_n, \lambda_n, c_n) \}$ be the sequence generated by Algorithm~\ref{alg:PALM}. Suppose that the sequence
$\{ f(x_n) \}$ is bounded below and the function $P$ satisfies the following assumptions:
\begin{enumerate}
\item{the function $s \mapsto P(s, t)$ is nondecreasing and $P(0, t) = 0$ for any $t \ge 0$;}

\item{if $a = + \infty$, then $P(s, t) / s \to + \infty$ as $s \to + \infty$ for any $t \ge 0$.}
\end{enumerate}
Then
\begin{gather} \label{eq:PALM_InfMeasLim}
  \lim_{n \to \infty} \sum_{i = 1}^{\ell} \max\{ 0, g_i(x_n) \} = 0,
  \\ \label{eq:PALM_ObjFuncInexactLim}
  \vartheta_* \le \liminf_{n \to \infty} f(x_n) \le \limsup_{n \to \infty} f(x_n) \le \vartheta_* + \varepsilon_*,
  \\ \label{eq:PALM_DualFuncInexactLim}
  \vartheta_* - \varepsilon_* \le \liminf_{n \to \infty} \Theta(\mu_n)
  \le \limsup_{n \to \infty} \Theta(\mu_n) \le \vartheta_*,
\end{gather}
where $\vartheta_* = \min\{ f_*, \liminf_{y \to 0} \beta(y) \}$. If, in addition, the optimal value function $\beta$ is
lsc at the origin and the functions $f$ and $g_i$, $i \in \{ 1, \ldots, \ell \}$ are lsc on $Q$, then 
$\vartheta_* = f_*$ and limit points of the sequence $\{ x_n \}$ (if such points exist) are $\varepsilon_*$-optimal
solutions of problem \eqref{prob:InequalConstr}.
\end{theorem}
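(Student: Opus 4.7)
The strategy is to recognize Algorithm~\ref{alg:PALM} as an instance of the inexact primal-dual method (Algorithm~\ref{alg:PrimalDualMethod}) and invoke Theorem~\ref{thrm:UnivConvThrmInexact}. I would identify problem~\eqref{prob:InequalConstr} with~$(\mathcal{P})$ by setting $Y = \mathbb{R}^{\ell}$ with the $\ell_1$-norm and $\Phi(x) = g(x) + \mathbb{R}^{\ell}_+$, so that $\dist(0, \Phi(x)) = \sum_{i=1}^{\ell} \max\{0, g_i(x)\}$. Put $M = \mathbb{R}^{\ell}_+ \times (0, +\infty)$, $\mu = (\lambda, c)$, and
\[
  \varphi(x, \mu) = \frac{1}{c}\sum_{i=1}^{\ell} P(c g_i(x), \lambda_i)
\]
(with value $+\infty$ whenever some $c g_i(x) \ge a$). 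Under this identification $F(x, \mu) = L_P(x, \lambda, c)$ and $\mu_n = (\lambda_n, c_n)$, so $x_n$ is exactly an $\varepsilon_n$-optimal subproblem solution in the sense of Algorithm~\ref{alg:PrimalDualMethod}.

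It then remains to verify weak duality, the two conditions in~\eqref{eq:BasicAssumptions}, and the $\varphi$-convergence to zero condition. Weak duality is immediate from Lemma~\ref{lem:WeakDuality}: for feasible $x$, $cg_i(x) \le 0 < a$, so monotonicity of $P(\cdot, t)$ and $P(0, t) = 0$ yield $P(cg_i(x), \lambda_i) \le 0$ and hence $\varphi(x, \mu) \le 0$. The lower bound on $\varphi(x_n, \mu_n)$ follows from~\eqref{PFuncCond} combined with the safeguarded penalty rule: for $n \ge 1$,
\[
  \varphi(x_n, \mu_n) \ge \frac{1}{c_n}\sum_{i=1}^{\ell} r(\lambda_{n,i}) \ge -\frac{1}{c_n}\sum_{i=1}^{\ell} |r(\lambda_{n,i})| \ge -\frac{1}{n},
\]
so $\liminf_n \varphi(x_n, \mu_n) \ge 0$. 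For the $\varphi$-convergence to zero I would choose $\{t_n\}$ decreasing fast enough that $c_n t_n \to 0$; continuity of $P$ and $P(0, t) = 0$ then force $\sum_i P(c_n t_n, \lambda_{n,i})/c_n \to 0$, so $\limsup_n \varphi(z_n, \mu_n) \le 0$ whenever $\dist(0, \Phi(z_n)) \le t_n$.

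The crux is~\eqref{eq:PALM_InfMeasLim}, which I would prove by contradiction. If some $\delta > 0$, subsequence $\{x_{n_k}\}$, and index $j$ satisfy $g_j(x_{n_k}) \ge \delta$, then $s_k := c_{n_k}g_j(x_{n_k}) \to +\infty$ because the update rule forces $c_n \ge n$. The case $a < +\infty$ is ruled out at once since $L_P(x_{n_k}, \mu_{n_k}) < +\infty$ requires $s_k < a$. In the case $a = +\infty$, split the sum at index $j$, bound the remaining terms from below via $P \ge r$ and the safeguarding rule, and rewrite the $j$-th contribution as $[P(s_k, \lambda_{n_k,j})/s_k]\,g_j(x_{n_k})$; the superlinear growth hypothesis $P(s,t)/s \to +\infty$, together with $g_j(x_{n_k}) \ge \delta$ and the lower-boundedness of $\{f(x_n)\}$, then forces $L_P(x_{n_k}, \mu_{n_k}) \to +\infty$, contradicting weak duality $L_P(x_n, \mu_n) \le f_* + \varepsilon_n$.

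With these ingredients in place, Theorem~\ref{thrm:UnivConvThrmInexact} delivers~\eqref{eq:PALM_ObjFuncInexactLim}--\eqref{eq:PALM_DualFuncInexactLim}. Lower semicontinuity of $\beta$ at the origin promotes $\vartheta_*$ to $f_*$; lsc of $f$ and of the $g_i$ makes $\dist(0, \Phi(\cdot)) = \sum_i\max\{0, g_i(\cdot)\}$ lsc on $Q$ and $\Phi$ closed-valued, so the conclusion about limit points of $\{x_n\}$ follows from item~1 of Theorem~\ref{thrm:UnivConvThrmInexact}. The main obstacle is the $a = +\infty$ case of the infeasibility argument: the growth hypothesis $P(s,t)/s \to +\infty$ is assumed only pointwise in~$t$, so one must handle the possibility that $\lambda_{n_k,j}$ is itself unbounded along the selected subsequence; the safeguarding $c_n \ge n\sum_i |r(\lambda_{n,i})|$ is the tool designed to absorb this, and making the passage fully rigorous is the subtle step.
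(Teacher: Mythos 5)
Your proposal is correct and takes essentially the same route as the paper: identify $\Phi(x)=g(x)+\mathbb{R}^{\ell}_+$, verify weak duality via Lemma~\ref{lem:WeakDuality} using $P(s,t)\le P(0,t)=0$ for $s\le 0$, get $\varphi(x_n,\mu_n)\ge -1/n$ from \eqref{PFuncCond} and the penalty rule, prove \eqref{eq:PALM_InfMeasLim} by contradiction (splitting the cases $a<+\infty$ and $a=+\infty$ exactly as you do), check $\varphi$-convergence to zero by choosing $t_n$ per $n$ so that $P(c_n s,\lambda_{ni})$ is small for $s\le t_n$, and invoke Theorem~\ref{thrm:UnivConvThrmInexact}. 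The one step you flag as subtle --- applying $P(s,t)/s\to+\infty$ with $t=\lambda_{n_k i}$ varying along the subsequence --- is treated no more carefully in the paper, which derives the same lower bound $\inf_n f(x_n)+\tfrac{1}{c_{n_k}}P(c_{n_k}\delta,\lambda_{n_k i})-\tfrac{1}{n_k}$ and simply asserts divergence, so you have not omitted anything the paper actually supplies.
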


\begin{proof}
It should be noted that the first assumption on the function $P$ ensures that $\lambda_n \ge 0$ for any
$n \in \mathbb{N}$ and, therefore, the method is correctly defined.

Denote $M = \mathbb{R}^{\ell}_+ \times (0, + \infty)$ and for any $\mu = (\lambda, c) \in M$ define
\[
  \varphi(x, \mu) = \begin{cases}
    \frac{1}{c} \sum_{i = 1}^{\ell} P(c g_i(x), \lambda_i), & \text{if }
    c g_i(x) < a \enspace \forall i \in \{ 1, \ldots, \ell \},
    \\
    + \infty, & \text{otherwise.}
  \end{cases}
\]
Then $L_P(x, \lambda, c) = f(x) + \varphi(x, (\lambda, c))$. For the sake of convenience, denote 
$\mu_n = (\lambda_n, c_n)$.

By the first assumption on the function $P$ for any $s \le 0$ and $t \in \mathbb{R}_+$ one has $P(s, t) \le 0$, which 
implies that for any feasible point $x$ and any $\mu \in M$ one has $\varphi(x, \mu) \le 0$. Hence by 
Lemma~\ref{lem:WeakDuality} the weak duality holds and, therefore, $L_P(x_n, \lambda_n, c_n) \le f_* + \varepsilon_n$
for any $n \in \mathbb{N}$.

Bearing in mind condition \eqref{PFuncCond} and the rule for updating the penalty parameter on Step~1 of 
Algorithm~\ref{alg:PALM} one gets that
\[
  \varphi(x_n, \mu_n) = \frac{1}{c_n} \sum_{i = 1}^{\ell} P(c_n g_i(x_n), \lambda_{ni})
  \ge \frac{1}{c_n} \sum_{i = 1}^{\ell} r(\lambda_{ni}) \ge - \frac{1}{c_n} \sum_{i = 1}^{\ell} |r(\lambda_{ni})|
  \ge - \frac{1}{n}
\]
for any $n \ge 1$, which implies that $\liminf_{n \to \infty} \varphi(x_n, \mu_n) \ge 0$.

Let us now check that $\max\{ 0, g_i(x_n) \} \to 0$ as $n \to \infty$ for any $i \in \{ 1, \ldots, \ell \}$. 
Suppose by contradiction that this statement is false. Then there exist $i \in \{ 1, \ldots, \ell \}$, $\delta > 0$, 
and a subsequence $\{ x_{n_k} \}$ such that $g_i(x_{n_k}) \ge \delta$. 

By the definition of $c_n$ (see Step~1 of Algorithm~\ref{alg:PALM}) one has $c_n \ge n$ for all $n \in \mathbb{N}$.
Consequently, if $a < + \infty$, then $c_{n_k} g_i(x_{n_k}) \ge a$ and $\varphi(x_{n_k}, \mu_{n_k}) = + \infty$ for any
sufficiently large $k$, which is impossible.

Let us now consider the case $a = + \infty$. With the use of condition \eqref{PFuncCond} and the fact that the function 
$s \mapsto P(s, t)$ is nondecreasing one gets
\begin{align*}
  L_P(x_{n_k},& \lambda_{n_k}, c_{n_k})
  = f(x_{n_k}) + \frac{1}{c_{n_k}} \sum_{j = 1}^{\ell} P(c_{n_k} g_j(x_{n_k}), \lambda_{{n_k}j})
  \\
  &\ge \inf_n f(x_n) + \frac{1}{c_{n_k}} P(c_{n_k} g_i(x_{n_k}), \lambda_{{n_k}i})
  + \frac{1}{c_{n_k}} \sum_{j \ne i} P(c_{n_k} g_j(x_{n_k}), \lambda_{{n_k}j}) 
  \\
  &\ge \inf_n f(x_n) + \frac{1}{c_{n_k}} P(c_{n_k} \delta, \lambda_{{n_k}i})
  - \frac{1}{c_{n_k}} \sum_{j \ne i} |r(\lambda_{{n_k}j})|
  \\
  &\ge \inf_n f(x_n) + \frac{1}{c_{n_k}} P(c_{n_k} \delta, \lambda_{{n_k}i}) - \frac{1}{n_k}.
\end{align*}
By our assumption the sequence $\{ f(x_n) \}$ is bounded below. Hence taking into account the second assumption on 
the function $P$ one can conclude that $L_P(x_{n_k}, \lambda_{n_k}, c_{n_k}) \to + \infty$ as $k \to \infty$, which
contradicts the fact that by the weak duality $L_P(x_n, \lambda_n, c_n) \le f_* + \varepsilon_n$ for any 
$n \in \mathbb{N}$. Thus, $\max\{ 0, g_i(x_n) \} \to 0$ as $n \to \infty$ for any $i \in \{ 1, \ldots, \ell \}$.

Let us finally verify that the sequence $\{ (\lambda_n, c_n) \}$ satisfies the $\varphi$-convergence to zero condition.
Indeed, since $P(0, t) = 0$ and the function $s \mapsto P(s, t)$ is continuous and nondecreasing for all 
$t \in \mathbb{R}_+$, for any $n \ge 1$ one can find $t_n > 0$ such that $c_n s < a$ and 
$P(c_n s, \lambda_{ni}) < c_n / n \ell$ for all $s \le t_n$. Therefore, for any sequence $\{ z_n \} \subset Q$
satisfying the inequality $\sum_{i = 1}^{\ell} \max\{ 0, g_i(z_n) \} \le t_n$ one has
\[
  \varphi(z_n, \mu_n) = \frac{1}{c_n} \sum_{i = 1}^{\ell} P(c_n g_i(z_n), \lambda_{ni}) \le \frac{1}{n}
\]
for any $n \in \mathbb{N}$. Consequently, $\limsup_{n \to \infty} \varphi(z_n, \mu_n) \le 0$ and the sequence 
$\{ (\lambda_n, c_n) \}$ satisfies the $\varphi$-convergence to zero condition. Therefore, applying
Theorem~\ref{thrm:UnivConvThrmInexact} we arrive at the required result.
\end{proof}

\begin{remark}
The previous theorem significantly generalizes \cite[Theorem~5]{WangLi2009}, since it shows that 
\cite[Assumption~$(H_4)$]{WangLi2009} and the assumptions that
\[
  P(s, 0) \ge 0 \quad \forall s \in (- \infty, a), \quad
  \lim_{t \to + \infty} P(s, t) = + \infty \quad \forall s > 0
\]
are completely redundant, while the assumption on the boundedness below of the function $f$ on the set $Q$
(see \cite[Assumption~1]{WangLi2009}) can be replaced by a much less restrictive assumption on the boundedness below 
of the sequence $\{ f(x_n) \}$.
\end{remark}

\section{Concluding remarks}

We presented the universal convergence theorem for inexact primal-dual methods that (i) can be applied to a wide variety
of penalty and augmented Lagrangian methods and (ii) reduces convergence analysis of these methods to verification of
some simple conditions related to the convergence of the infeasibility measure to zero. By applying this theorem to
convergence analysis of several particular primal-dual methods, we demonstrated that with the use of the universal
convergence theorem one can not only easily recover existing results on convergence of various primal-dual methods, but
also significantly strengthen and generalize them.

Although the universal convergence theorem is a very general result that can be applied to, perhaps, a majority of
existing primal-dual penalty and augmented Lagrangian methods, there are some methods for which one cannot recover
existing convergence theorems with the use of the universal convergence theorem. The modified subgradient algorithm from
\cite{Gasimov} is one such method. One can prove its dual convergence, despite the fact that the infeasibility measure
might not converge to zero along the sequences generated by this method (see \cite[Example~1]{BurachikGasimov}).
Moreover, for some closely related primal-dual methods
\cite{BurachikKayaMammadov,BurachikIusemMelo2013,BurachikLiu2023} one can prove convergence theorems whose claims are
significantly stronger than the claim of the universal convergence theorem (e.g. one can prove strong convergence of
the dual variables). 

Thus, although the universal convergence theorem unifies and significantly generalizes many existing results on
convergence of primal-dual methods, it should be viewed not as a universal result that can completely eliminate 
the need for detailed convergence analysis, but rather as a useful tool that can significantly simplify this analysis
in many important cases.

\bibliographystyle{abbrv}  
\bibliography{UnivConvergenceThm_bibl}

\begin{thebibliography}{10}

\bibitem{Bertsekas}
D.~P. Bertsekas.
\newblock {\em Constrained Optimization and Lagrange Multiplier Methods}.
\newblock Academic Press, New York, 1982.

\bibitem{BirginFloudasMartinez}
E.~G. Birgin, C.~A. Floudas, and J.~M. Martinez.
\newblock Global minimization using an {A}ugmented {L}agrangian method with
  variable lower-level constraints.
\newblock {\em Math. Program.}, 125:139--162, 2010.

\bibitem{BirginMartinez}
E.~G. Birgin and J.~M. Martinez.
\newblock {\em Practical Augmented {L}agrangian Methods for Constrained
  Optimization}.
\newblock SIAM, Philadelphia, 2014.

\bibitem{Burachick2011}
R.~S. Burachik.
\newblock On primal convergence for augmented {L}agrangian duality.
\newblock {\em Optim.}, 60:979--990, 2011.

\bibitem{BurachikGasimov}
R.~S. Burachik, R.~N. Gasimov, N.~A. Ismayilova, and C.~Kaya.
\newblock On a modified subgradient algorithm for dual problems via sharp
  augmented {L}agrangian.
\newblock {\em J. Glob. Optim.}, 34:55--78, 2006.

\bibitem{BurachickIusemMelo_SharpLagr}
R.~S. Burachik, A.~N. Iusem, and J.~G. Melo.
\newblock A primal dual modified subgradient algorithm with sharp {L}agrangian.
\newblock {\em J. Glob. Optim.}, 46:55--78, 2010.

\bibitem{BurachikIusemMelo2013}
R.~S. Burachik, A.~N. Iusem, and J.~G. Melo.
\newblock An inexact modified subgradient algorithm for primal-dual problems
  via augmented {L}agrangians.
\newblock {\em J. Optim. Theory Appl.}, 157:108--131, 2013.

\bibitem{BurachikKayaMammadov}
R.~S. Burachik, C.~Y. Kaya, and M.~Mammadov.
\newblock An inexact modified subgradient algorithm for nonconvex optimization.
\newblock {\em Comput. Optim. Appl.}, 45:1--34, 2008.

\bibitem{BurachikKayaPrice}
R.~S. Burachik, C.~Y. Kaya, and C.~J. Price.
\newblock A primal-dual penalty method via rounded weighted-$\ell_1$
  {L}agrangian duality.
\newblock {\em Optim.}, 71:3981--4017, 2022.

\bibitem{BurachikLiu2023}
R.~S. Burachik and X.~Liu.
\newblock An inexact deflected subgradient algorithm in infinite dimensional
  spaces.
\newblock {\em arXiv: 2302.02072}, 2023.

\bibitem{CordovaOliveiraSagastizabal}
M.~Cordova, W.~Oliveira, and C.~Sagastiz\'{a}bal.
\newblock Revisiting augmented {L}agrangian duals.
\newblock {\em Math. Program.}, 196:235--277, 2022.

\bibitem{CuiDingLiZhao}
Y.~Cui, C.~Ding, X.~Li, and X.~Zhao.
\newblock Augmented {L}agrangian method for convex matrix optimization
  problems.
\newblock {\em J. Oper. Res. Soc. China}, 10:305--342, 2022.

\bibitem{Demyanov2010}
V.~F. Demyanov.
\newblock Nonsmooth optimization.
\newblock In G.~{D}i Pillo and F.~Schoen, editors, {\em Nonlinear optimization.
  Lecture notes in mathematics, vol. 1989}, pages 55--163. Springer-Verlag,
  Berlin, 2010.

\bibitem{Dolgopolik2016}
M.~V. Dolgopolik.
\newblock A unifying theory of exactness of linear penalty functions.
\newblock {\em Optim.}, 65:1167--1202, 2016.

\bibitem{Dolgopolik2017}
M.~V. Dolgopolik.
\newblock A unifying theory of exactness of linear penalty functions {I}{I}:
  parametric penalty functions.
\newblock {\em Optim.}, 66:1577--1622, 2017.

\bibitem{Dolgopolik2022}
M.~V. Dolgopolik.
\newblock Exact penalty functions with multidimensional penalty parameter and
  adaptive penalty updates.
\newblock {\em Optim. Lett.}, 16:1281--1300, 2022.

\bibitem{Dolgopolik_DCSemidef}
M.~V. Dolgopolik.
\newblock {D}{C} semidefinite programming and cone constrained {D}{C}
  optimization {I}{I}: local search methods.
\newblock {\em Comput. Optim. Appl.}, 85:993--1031, 2023.

\bibitem{Dolgopolik2024}
M.~V. Dolgopolik.
\newblock Convergence analysis of primal-dual augmented {L}agrangian methods
  and duality theory.
\newblock {\em arXiv: 2409.13974}, 2024.

\bibitem{Gasimov}
R.~N. Gasimov.
\newblock Augmented {L}agrangian duality and nondifferentiable optimization
  methods in nonconvex programming.
\newblock {\em J. Glob. Optim.}, 24:187--203, 2002.

\bibitem{Grossmann}
C.~Grossmann.
\newblock Smoothing techniques for exact penalty methods.
\newblock In C.~M. {da Fonseca}, D.~V. Huynh, S.~Kirkland, and V.~K. Tuan,
  editors, {\em Contemporary Mathematics. Vol. 658. A Panorama of Mathematics:
  Pure and Applied}, pages 249--265. Americal Mathematical Society, Providence,
  Rhode Island, 2016.

\bibitem{Iusem99}
A.~N. Iusem.
\newblock Augmented {L}agrangian methods and proximal point methods for convex
  optimization.
\newblock {\em Investigaci\'{o}n Operativa}, 8:11--49, 1999.

\bibitem{LiuZhang2007}
Y.~J. Liu and L.~W. Zhang.
\newblock Convergence analysis of the augmented {L}agrangian method for
  nonlinear second-order cone optimization problems.
\newblock {\em Nonlinear Anal.: Theory, Methods, Appl.}, 67:1359--1373, 2007.

\bibitem{LiuzziLucidi}
G.~Liuzzi and S.~Lucidi.
\newblock A derivative-free algorithm for inequality constrained nonlinear
  programming via smoothing of an $\ell_{\infty}$ penalty function.
\newblock {\em SIAM J. Optim.}, 20:1--29, 2009.

\bibitem{LuoSunWu}
H.~Luo, X.~Sun, and H.~Wu.
\newblock Convergence properties of augmented {L}agrangian methods for
  constrained global optimization.
\newblock {\em Optim. Methods Softw.}, 23:763--778, 2008.

\bibitem{LuoSunLi}
H.~Z. Luo, X.~L. Sun, and D.~Li.
\newblock On the convergence of augmented {L}agrangian methods for constrained
  global optimization.
\newblock {\em SIAM J. Optim.}, 18:1209--1230, 2007.

\bibitem{LuoWuChen2012}
H.~Z. Luo, H.~X. Wu, and G.~T. Chen.
\newblock On the convergence of augmented {L}agrangian methods for nonlinear
  semidefinite programming.
\newblock {\em J. Glob. Optim.}, 54:599--618, 2012.

\bibitem{MengDangYang}
Z.~Meng, C.~Dang, and X.~Q. Yang.
\newblock On the smoothing of the square-root exact penalty function for
  inequality constrained optimization.
\newblock {\em Comput. Optim. Appl.}, 35:375--398, 2006.

\bibitem{Moreau}
J.~J. Moreau.
\newblock D\'{e}composition orthogonale d'un espace hilbertien selon deux
  c{\^o}nes mutuellement polaires.
\newblock {\em Competes rendus hebdomadaires des s\'{e}ances de
  l'{A}cad\'{e}mie des sciences}, 255:238--240, 1962.

\bibitem{Pinar}
M.~C. Pinar and S.~A. Zenios.
\newblock On smoothing exact penalty functions for convex constrained
  optimization.
\newblock {\em SIAM J. Optim.}, 4:486--511, 1994.

\bibitem{Polyak2002}
R.~A. Polyak.
\newblock Nonlinear rescaling vs. smoothing technique in convex optimization.
\newblock {\em Math. Program.}, 92:197--235, 2002.

\bibitem{Rockafellar73}
R.~T. Rockafellar.
\newblock The multiplier method of {H}estenes and {P}owell applied to convex
  programming.
\newblock {\em J. Optim. Theory Appl.}, 12:555--562, 1973.

\bibitem{RockafellarWets}
R.~T. Rockafellar and R.~J.-B. Wets.
\newblock {\em Variational Analysis}.
\newblock Springer-Verlar, Berlin, 1998.

\bibitem{RubinovHuangYang2002}
A.~M. Rubinov, X.~X. Huang, and X.~Q. Yang.
\newblock The zero duality gap property and lower semicontinuity of the
  perturbation function.
\newblock {\em Math. Oper. Res.}, 27:775--791, 2002.

\bibitem{ShapiroSun}
A.~Shapiro and J.~Sun.
\newblock Some properties of the augmented {L}agrangian in cone constrained
  optimization.
\newblock {\em Math. Oper. Res.}, 29:479--491, 2004.

\bibitem{Soltan}
V.~Soltan.
\newblock Moreau-type characterizations of polar cones.
\newblock {\em Linear Algebra Appl.}, 567:45--62, 2019.

\bibitem{TsengBertsekas}
P.~Tseng and D.~P. Bertsekas.
\newblock On the convergence of the exponential multiplier method for convex
  programming.
\newblock {\em Math. Program.}, 60:1--19, 1993.

\bibitem{WangLi2009}
C.~Y. Wang and D.~Li.
\newblock Unified theory of augmented {L}agrangian methods for constrained
  global optimization.
\newblock {\em J. Glob. Optim.}, 44:433--458, 2009.

\bibitem{WuLuoDingChen2013}
H.~Wu, H.~Luo, X.~Ding, and G.~Chen.
\newblock Global convergence of modified augmented {L}agrangian methods for
  nonlinear semidefinite programming.
\newblock {\em Comput. Optim. Appl.}, 56:531--558, 2013.

\bibitem{Xu2021}
Y.~Xu.
\newblock Iteration complexity of inexact augmented {L}agrangian method for
  constrained convex programming.
\newblock {\em Math. Program.}, 185:199--244, 2021.

\bibitem{YangMengHuang}
X.~Q. Yang, Z.~Q. Meng, X.~X. Huang, and G.~Pong.
\newblock Smoothing nonlinear penalty functions for constrained optimization
  problems.
\newblock {\em Numer. Funct. Anal. Optim.}, 24:351--364, 2003.

\bibitem{Zaslavski}
A.~J. Zaslavski.
\newblock {\em Optimization on Metric and Normed Spaces}.
\newblock Springer, New York, 2010.

\end{thebibliography}

\end{document}